\newcommand{\N}{\mathbb{N}}
\newcommand{\Z}{\mathbb{Z}}
\newcommand{\R}{\mathbb{R}}
\renewcommand{\P}{\mathbb{P}}
\newcommand{\mc}[1]{{\mathcal #1}}
\newcommand{\mf}[1]{{\mathfrak #1}}
\newcommand{\bb}[1]{{\mathbb #1}}
\renewcommand{\epsilon}{\varepsilon}
\newcommand{\diff}{\,\mathrm{d}}
\begin{document}

\title*{Coupling hydrodynamics of several Facilitated
Exclusion Processes with closed boundaries}
\titlerunning{
Hydrodynamics of
FEP
with closed boundaries}

\author{Hugo Da Cunha and Lu Xu}
\institute{Hugo Da Cunha \at Université Claude Bernard Lyon 1, CNRS, Ecole Centrale de Lyon, INSA Lyon, Université Jean Monnet, ICJ UMR5208,
69622 Villeurbanne, France. \at \email{dacunha@math.univ-lyon1.fr}
\and 
Lu Xu \at Gran Sasso Science Institute, Viale Luigi Rendina 26, 67100 L'Aquila, Italy \at \email{lu.xu@gssi.it}}

\maketitle

\abstract{In this paper, we prove the hydrodynamic limit for the ergodic dynamics of Facilitated Exclusion Process with closed boundaries in the symmetric, asymmetric and weakly asymmetric regimes. For this, we couple it with a Simple Exclusion Process by constructing a mapping that transforms the facilitated dynamics into the simple one, inspired by \cite{Goldstein_Lebowitz}. As the hydrodynamic behaviour of simple exclusion process has been extensively studied,
we can deduce the corresponding hydrodynamics for the facilitated exclusion process.}

\keywords{Facilitated Exclusion • Hydrodynamic limit • Boundary conditions • Kinetically constrained model}

\section{Introduction}
\label{sec:introduction}

The Facilitated Exclusion Process is a model of \emph{kinetically constrained stochastic lattice gas} introduced in the physics literature in \cite{RossiPastor} to model some mechanisms of solid/liquid interfaces. This model exhibits a phase transition at the critical particle density $\frac12$, separating an absorbing subcritical phase from an active supercritical phase. In recent years, it has drawn the attention of the mathematical community mostly on the one-dimensional lattice $\Z$ or the torus. For the symmetric dynamics on the periodic torus, the macroscopic behaviour, more commonly known as \emph{hydrodynamic limit}, is given by a fast diffusion equation in the high-density phase \cite{BESS20}, or by a Stefan problem separating both phases \cite{StefanProblem}. The hydrodynamic limit for the asymmetric dynamics has also been investigated on the full line in \cite{mappingHydrodynamics} by using a mapping towards a zero-range process which, on the contrary to FEP, has the nice property of being attractive. The fluctuations around equilibrium of all of these models, together with the weakly asymmetric dynamics, have been examined in \cite{ErignouxZhao}. Transience times for the FEP in finite volume have also been extensively studied in \cite{Chleboun,Massoulie}. It is only recently that the FEP has been considered in presence of boundary conditions. In \cite{DCES}, the authors consider the symmetric FEP in contact with boundary reservoirs that can inject/remove particles in the system, and prove that depending on the strength of the boundary interactions, different boundary conditions are imposed on the fast diffusion equation ruling the hydrodynamic limit in the high-density phase. 

In this paper, we investigate symmetric, asymmetric and weakly asymmetric FEPs on the one-dimensional finite lattice with \emph{closed boundaries}, \textit{i.e.}~when impermeable walls are placed at both boundaries, not allowing particles to enter or escape the bulk. 
Inspired by the approach in \cite{mappingHydrodynamics}, we study the hydrodynamic limit via a one-to-one correspondence, similar to the one constructed in \cite{Goldstein_Lebowitz}, between FEP in its ergodic component and Simple Exclusion Process (SEP). As the hydrodynamic behaviour of SEP has been well studied in a variety of contexts \cite{BMNS17,CapitaoGoncalves,NotesIHP,Xu_hydrodynamics}, the one of FEP can be deduced correspondingly.

\medskip

To be more precise about the model and result, let $\sigma$, $p$ and $\kappa$ be non-negative parameters, with $\sigma\neq 0$. In the model considered here, each particle performs a random walk on the finite lattice $\{1,\hdots ,N-1\}$ satisfying that there can be at most one particle per site (\emph{exclusion rule}), and the following \emph{facilitated kinetic constraint}:
\begin{itemize}
    \item a particle lying at a site $x$ can jump to site $x+1$ provided there is a particle at site $x-1$, and in this case the jump occurs at rate $\sigma +pN^{-\kappa}$;
    \item a particle lying at site $x$ can jump to site $x-1$ provided there is a particle at site $x+1$, and in this case the jump occurs at rate $\sigma$.
\end{itemize}
At both ends of the system there are impermeable walls, meaning that a particle at site $1$ (resp.~$N-1$) cannot jump to the left (resp.~right). However, we assume that it can always jump to the right (resp.~left) with rate $\sigma +pN^{-\kappa}$ (resp.~$\sigma$). Then, we accelerate this process with a time-scale $\Theta_N=N^{(1+\kappa )\wedge 2}$. We prove the hydrodynamic limit of this model when it starts straight from its high-density phase, and even when it has already reached its ergodic component to not care about the transience time (partly estimated in \cite{Chleboun} for the segment). Namely, we show that under suitable conditions on the initial state, the macroscopic density of particles $\rho=\rho_t(u)$ in the system evolves according to the PDE
\begin{equation*}
    \partial_t\rho + p\mathds{1}_{\{\kappa\le 1\}}\partial_u\left(\frac{(1-\rho )(2\rho -1)}{\rho}\right) =\sigma\mathds{1}_{\{\kappa\ge 1\}}\partial_u^2\left(\frac{2\rho -1}{\rho}\right)
\end{equation*}
for $u \in [0,1]$, with boundary conditions \emph{formally} given by
\begin{equation*}
    \left[ p\mathds{1}_{\{\kappa\le 1\}}\frac{(1-\rho )(2\rho -1)}{\rho} - \sigma\mathds{1}_{\{\kappa\ge 1\}}\partial_u\left(\frac{2\rho -1}{\rho}\right) \right] \bigg|_{u=0,1} = 0.
\end{equation*}
We underline that when the PDE is parabolic ($\kappa\ge1$), the attached boundary conditions are of Neumann-type if $\kappa>1$ and Robin-type if $\kappa=1$.
However, if the PDE is hyperbolic ($\kappa<1$), the boundary conditions shall be understood in the sense of \cite{Otto}, see Definition \ref{defin:entropysol} in Section \ref{appendix} for details.

\medskip

This article is organised as follows. In Section \ref{sec:models} we introduce the process that we study throughout the paper, namely the Facilitated Exclusion Process with closed boundaries. In Section \ref{sec:results} we build a microscopic mapping between the dynamics of the FEP and the dynamics of the SEP, and we state the main result of the paper, Theorem \ref{thm:HLFEPs}, that is the hydrodynamic for FEP with closed boundaries. In Section \ref{sec:proof}, we explain how this macroscopic mapping between dynamics induces a macroscopic mapping between particles densities, and how this allows to deduce Theorem \ref{thm:HLFEPs} from the already known hydrodynamic limits of SEP. Section \ref{sec:mappingsolutions} is dedicated to prove that, through the macroscopic mapping between densities, the solutions of the hydrodynamic equations of the SEP are sent to the solutions of the hydrodynamic equations of the FEP. Finally, Section \ref{appendix} serves as an appendix listing all the notions of solutions used for the previously introduced hydrodynamic equations, and giving some useful results about entropy solutions.

\section{Model and notations}
\label{sec:models}

Let $N\in\N$ be a scaling parameter that shall go to infinity. Define the one-dimensional lattice $\Lambda_N=\lbrace 1,\hdots ,N-1\rbrace$ called \emph{bulk}, and also $\Omega_N=\{0,1\}^{\Lambda_N}$ the space of configurations. A configuration of particles is an element $\eta=(\eta_x)_{x\in\Lambda_N}\in\Omega_N$, where the occupation variable $\eta_x\in\{0,1÷\}$ indicates whether there is a particle at site $x$ or not. If $\eta\in \Omega_N$ is a configuration, and $x,x'\in\Lambda_N$ are two sites, then we define the configuration $\eta^{x,x'}$ to be the one obtained from $\eta$ by exchanging the two occupation variables $\eta_x$ and $\eta_{x'}$.



\subsection{Facilitated Exclusion Processes}
\label{sec:FEPs}

In this article, we focus on the \emph{Facilitated Exclusion Process} (FEP) on $\Lambda_N$.
It encodes different settings (symmetric, asymmetric and weakly asymmetric) but always with closed (impermeable) boundaries.

Define the \emph{symmetric} and \emph{totally asymmetric} facilitated exclusion processes to be the Markov processes on $\Omega_N$ driven respectively by the infinitesimal generators $\mathcal{L}_N^\mathrm{S}$ and $\mathcal{L}_N^\mathrm{TA}$ acting on functions $f:\Omega_N\longrightarrow\R$ through the formulas
\begin{align}
    &\mathcal{L}_N^\mathrm{S}f(\eta ) = \sum_{x=1}^{N-2} \big( \eta_{x-1}\eta_x(1-\eta_{x+1})+(1-\eta_x)\eta_{x+1}\eta_{x+2}\big)\big[ f(\eta^{x,x+1})-f(\eta )\big],\\
    &\mathcal{L}_N^\mathrm{TA}f(\eta )= \sum_{x=1}^{N-2} \eta_{x-1}\eta_x(1-\eta_{x+1})\big[ f(\eta^{x,x+1})-f(\eta )\big].
\end{align}
In these sums, we set $\eta_0=\eta_N=1$ by convention, in order to ensure that a particle that is at the border of the system is always able to jump towards the bulk without undergoing any constraint. In fact, the impermeable walls at the boundaries are seen as particles that are there and cannot move.

Let $\sigma >0$ and $p,\kappa \ge 0$, and set $\Theta_N=N^{(1+\kappa)\land2}$. Consider the Markov process driven by the infinitesimal generator
\begin{equation}\label{def:genFEP}
    \mathcal{L}_N := \Theta_N \big( \sigma\mathcal{L}_N^\mathrm{S}+pN^{-\kappa}\mathcal{L}_N^\mathrm{TA}\big).
\end{equation}
This process has the advantage of encompassing several models, namely in the \emph{diffusive case} $\Theta_N=N^2$:
\begin{itemize}
    \item \textbf{(SFEP)} the \emph{symmetric facilitated exclusion process} when $p=0$ and $\sigma >0$;
    \item \textbf{(vWAFEP)} the \emph{very weakly asymmetric facilitated exclusion process} when $p>0$, $\sigma >0$ and $\kappa >1$;
    \item \textbf{(WAFEP)} the \emph{weakly asymmetric facilitated exclusion process} when $p >0$, $\sigma>0$ and $\kappa =1$;
\end{itemize}
and in the \emph{hyperbolic case} $\Theta_N=N^{1+\kappa}$:
\begin{itemize}
    \item \textbf{(AFEPvv)} the \emph{asymmetric facilitated exclusion process with vanishing viscosity} when $p >0$, $\sigma>0$ and $0\le\kappa<1$.
\end{itemize}
We investigate the hydrodynamic limit of the corresponding Markov process for all of the cases listed above.
Only for \textbf{(AFEPvv)} we require an extra technical condition $\kappa>\frac12$, see Theorem \ref{thm:HLFEPs} and Proposition \ref{prop:HLSEPs}.

\subsection{Ergodicity property}
\label{sec:ergodicityFEP}

Because of the kinetic constraint, FEP displays two distinct phases
: at density greater than $\frac12$, after a transience time, the process reaches an \emph{ergodic component} that we describe hereafter; whereas at density smaller than or equal to $\frac12$, after a transience time, the process freezes because all particles become isolated. For more details about the phase transition, see \cite[Section 2.2.1]{BESS20}.

In this paper, we focus on the FEPs
when they have already reached their ergodic component.
The ergodic component $\mc{E}_N$ is a subset of $\Omega_N$ that is made of the configurations in which all empty sites are isolated: 
\begin{equation}
    \label{def:EN}
    \mc{E}_N:=\big\{ \eta\in\Omega_N\; :\; \forall \{ x,x+1\}\subset\Lambda_N,\; \eta_x+\eta_{x+1}\ge 1\big\} .
\end{equation}
Given $k \le N$, let $\Omega_N^k$ be the hyperplane of configurations containing $k$ particles, and $\mc{E}_N^k$ be the set of ergodic ones in $\Omega_N^k$:
\begin{equation}
    \label{def:OmegaNk_ENk}
    \Omega_N^k:= \Big\{\eta\in\Omega_N\; :\; |\eta|:=\sum_{x\in\Lambda_N}\eta_x =k \Big\},\qquad \mc{E}_N^k:=\mc{E}_N\cap\Omega_N^k.
\end{equation}
Notice that the set $\mathcal{E}_N^k$ is non-empty if and only if $k\ge \big\lfloor \frac{N-1}{2}\big\rfloor$.

%

\section{Main results}
\label{sec:results}

\subsection{Coupling with Simple Exclusion Process}
\label{sec:micromap}

Coupling between FEP and other more tractable particle systems, \textit{e.g.}~simple exclusion on infinite lattice \cite{Goldstein_Lebowitz} and zero-range process \cite{mappingHydrodynamics}, has been used as an efficient method to study the statistical properties of FEP.
We generalize the coupling constructed in \cite{Goldstein_Lebowitz} on the full line to the case of finite lattice $\Lambda_N$.
The construction relies heavily on the fact that the total number of particles in the system is conserved along the dynamics:
\begin{equation*}
    \sum_{x\in\Lambda_N}\eta_x(t) = \sum_{x\in\Lambda_N}\eta_x(0), \qquad \forall\,t\ge 0.
\end{equation*}
This is the reason why we adopt the impermeable (closed) boundaries.
Another crucial feature that is necessary is that $\eta(0)$ lies in the ergodic component $\mathcal{E}_N$, and hence so does $\eta(t)$ for all $t>0$

Let $k\ge \big\lfloor \frac{N-1}{2}\big\rfloor$. We construct a function $\phi$ that maps each configuration $\eta\in\mc{E}_N^k$ to another configuration $\xi\in\Omega_M^\ell$ for some $(M,\ell)$ depending on $(N,k)$, as follows:
\begin{enumerate}
    \item First, we extend $\eta$ to a configuration $\bar\eta$ on $\{ 0,\dots N\}$ by adding the two fictive particles relative to the convention $\bar\eta_0=\bar\eta_N=1$ at both ends.
    \item Then, we number all the particles in $\{0,\dots ,N-1\}$ in $\bar\eta$ from $1$ to $M-1$.
    \item Finally, we define the configuration $\xi\in\Omega_M$ saying that the site $y$ in $\xi$ is occupied by a particle if, and only if, the $y$-th particle in $\bar\eta$ has a particle on its right neighbouring site.
\end{enumerate}
With this definition, we should have $M-1=k+1$ and the obtained configuration $\xi=\phi(\eta)$ has a number of particles equal to $k+1-(N-1-k)=2k-N+2$.
Hence, $(M,\ell)=(k+2,2k-N+2)$.
One can easily check that $\phi$ defines a one-to-one mapping between $\mc{E}_N^k$ and $\Omega_{k+2}^{2k-N+2}$, and that its inverse $\phi^{-1}$ can be described as follows:
\begin{enumerate}
    \item Take a configuration $\xi\in\Omega_{k+2}^{2k-N+2}$, and replace each of its `$0$' by `$10$'.
    \item Define $\eta=\phi^{-1}(\xi)$ to be the configuration obtained from it after removing the leftmost site (which is necessarily an occupied site `$1$').
\end{enumerate}
Since $\phi$ is well-defined on all non-empty $\mc{E}_N^k$, it naturally extends to an invertible map
\begin{equation}\label{eq:state-space}
\displaystyle
\phi: \mc{E}_N \longrightarrow \phi\big(\mc{E}_N\big) := \bigcup_{M=\lfloor\frac{N+1}{2}\rfloor}^{N+1} \Omega_M^{2M-N-2}.
\end{equation}

\noindent We plotted below an example of $\phi$ for a configuration $\eta\in\mathcal{E}_{15}^9$.
\begin{figure}[hbtp]
    \centering
    \includegraphics[width=\textwidth]{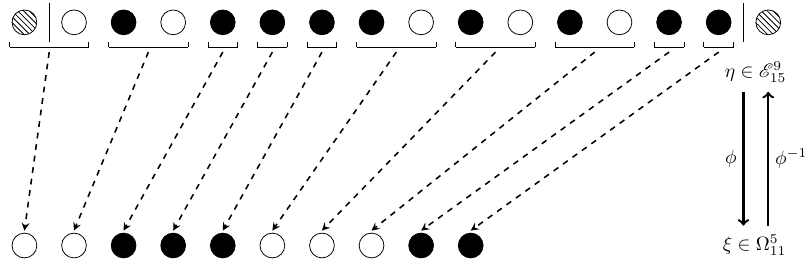}
    \label{fig:mapping}
    \caption{Illustration of the mapping $\phi$ on a configuration $\eta\in\mc{E}_{15}^9$. The hatched particles represent the fictive ones added at both ends of the configuration.}
\end{figure}

The map $\phi$ induces a map that gives a correspondence between the particles in $\eta$ and the sites in $\xi=\phi(\eta)$.
To see this, for $\eta\in\mc{E}_N$ let
\begin{equation}
    \label{def:Part(eta)}
    \mathrm{Part}(\eta)=\{x\in\Lambda_N\,:\,\eta_x=1\}
\end{equation}
be the set of coordinates of particles in $\eta$.
Notice that in $\xi=\phi (\eta )$, the first site always comes from the fictive particle which is added at site $0$, so one can define a bijection $\gamma_\xi: \Lambda_M\setminus\{1\}\longrightarrow\mathrm{Part}(\eta)$ as follows:
\begin{equation}
    \label{def:gammaxi}
    \gamma_\xi (y) = \sum_{z=1}^{y-1}\big( \xi_{z}+2(1-\xi_{z})\big) = \sum_{z=1}^{y-1}(2-\xi_{z}), \quad \forall\,y=2,\ldots,M-1.
\end{equation}
because the position $\gamma_\xi (y)$ of the particle that gave site $y\ge 2$ is equal to the number of particles, plus twice the number of holes before $y$ in $\xi$.

The mapping $\phi$ transforms the dynamics of the FEP into a dynamics without kinetic constraint, namely the Simple Exclusion Process (SEP).

\begin{theorem}[Microscopic mapping]
    \label{thm:micromap}
    Let $(\eta (t))_{t\ge 0}$ be the Markov process driven by the generator $\mc{L}_N$ defined in \eqref{def:genFEP}, and starting from an ergodic configuration $\eta (0)\in\mc{E}_N^k$.
    Then, $\xi(t):=\phi(\eta(t))$ is a Markov process evolving on $\Omega_M^\ell$ with $(M,\ell)=(k+2,2k-N+2)$, and its generator reads
    \begin{equation}
        \label{def:genSEP}
        \bb{L}_N=\Theta_N\big(\sigma\bb{L}_M^\mathrm{S}+pN^{-\kappa}\bb{L}_M^\mathrm{TA}\big),
    \end{equation}
    where for $f:\Omega_M^\ell\longrightarrow\mathbb R$, $\bb{L}_M^\mathrm{S}$ and $\bb{L}_M^\mathrm{TA}$ are respectively given by
    \begin{align}
    &\bb{L}_M^\mathrm{S}f(\xi) = \sum_{y=1}^{M-2} \big[ f(\xi^{y,y+1})-f(\xi )\big], \label{def:LSEP}\\
    &\bb{L}_M^\mathrm{TA}f(\xi )= \sum_{y=1}^{N-2} \xi_y(1-\xi_{y+1})\big[ f(\xi^{y,y+1})-f(\xi )\big]. \label{def:LTASEP}
    \end{align}
    Namely, $(\xi(t))_{t\ge0}$ is a simple exclusion process on $\Lambda_M$, in which a particle jumps to its right (resp.~left) neighbour with rate $\sigma+pN^{-\kappa}$ (resp.~$\sigma$).
\end{theorem}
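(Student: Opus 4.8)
The plan is to leverage that $\phi:\mc{E}_N^k\to\Omega_M^\ell$ is a bijection, so that $\xi(t)=\phi(\eta(t))$ is automatically a time-homogeneous Markov process: a deterministic relabelling of the state space preserves the Markov property, and the generator of $\xi$ is the pushforward $\bb{L}_N g=\big(\mc{L}_N(g\circ\phi)\big)\circ\phi^{-1}$, acting on $g:\Omega_M^\ell\to\R$. For this to make sense I first record that the closed boundaries conserve the number of particles and that the ergodic component is invariant under the dynamics, so that $\eta(t)\in\mc{E}_N^k$ for every $t$ and $\phi(\eta(t))$ is well defined and stays in the single hyperplane $\Omega_M^\ell$. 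The entire content then reduces to computing this pushforward generator and identifying it with \eqref{def:genSEP}.

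The heart of the argument is the combinatorial identity that an allowed FEP jump $\eta\mapsto\eta^{x,x+1}$, in which the moving particle carries the label $j$ in the numbering of $\bar\eta$, satisfies
\begin{equation*}
    \phi(\eta^{x,x+1})=\phi(\eta)^{\,j-1,\,j}.
\end{equation*}
To prove it I would read $\xi$ as a record of gaps: because $\eta\in\mc{E}_N$ has only isolated empty sites, two consecutive labelled particles lie at distance $1$ or $2$, and $\xi_y=1$ (resp.~$0$) precisely when particles $y$ and $y+1$ are adjacent (resp.~separated by a single hole). Shifting particle $j$ by one step changes only the two gaps adjacent to it, hence only $\xi_{j-1}$ and $\xi_j$, and it swaps them — a rightward jump sends $(\xi_{j-1},\xi_j)=(1,0)$ to $(0,1)$, a leftward jump sends $(0,1)$ to $(1,0)$. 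No other coordinate moves, since the exclusion rule forbids particle $j$ from overtaking a neighbour, so its label is preserved throughout.

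This same computation shows that the \emph{facilitation constraint} of the FEP is converted into the \emph{exclusion constraint} of the SEP. A rightward move of particle $j$ is admissible iff it is facilitated by a particle immediately on its left and has a hole to its right, i.e.~iff $\xi_{j-1}=1$ and $\xi_j=0$; symmetrically a leftward move requires $\xi_{j-1}=0,\ \xi_j=1$. Setting $y=j-1$, these are exactly the configurations on which the SEP bond $(y,y+1)$ supports a rightward, resp.~leftward, exchange, while the blocked cases $\xi_{j-1}=\xi_j$ (isolated particle, or particle hemmed in on both sides) are exactly those with no admissible move. Since in any configuration each particle has at most one admissible jump and each active FEP bond has a unique moving particle, the assignment (active FEP bond)$\,\mapsto\,(y,y+1)$ is a bijection onto the active SEP bonds; matching the rates term by term — the symmetric rate $\sigma$ acting in both directions as in $\bb{L}_M^{\mathrm S}$, and the extra rate $pN^{-\kappa}$ acting rightward only as in $\bb{L}_M^{\mathrm{TA}}$ — turns $\mc{L}_N(g\circ\phi)(\eta)$ into $\bb{L}_N g(\xi)$, as claimed.

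The step I expect to be most delicate is the bookkeeping at the two boundaries, governed by the fictive particles $\bar\eta_0=\bar\eta_N=1$. The left wall is the permanently immobile label $1$, which is why no SEP bond $(0,1)$ exists; one must verify that a jump of the particle at site $1$, always facilitated by $\bar\eta_0$, maps to the SEP bond $(1,2)$ carrying, in the rightward direction, the full rate $\sigma+pN^{-\kappa}$. Symmetrically, the rightmost labelled particle $M-1$ may sit at $N-2$ or $N-1$, and its interaction with the wall $\bar\eta_N$ must be matched with the rightmost SEP bond $(M-2,M-1)$; checking that it never produces a move past site $M-1$ — so that the image process is itself a closed-boundary SEP — is the part demanding the most care.
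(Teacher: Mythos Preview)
Your proposal is correct and follows exactly the approach the paper indicates: the paper's ``proof'' consists of the single sentence ``Theorem \ref{thm:micromap} is proved by straightforwardly checking all possible transitions, see also \cite{Goldstein_Lebowitz}. We omit the details here.'' Your outline supplies precisely those omitted details---the gap interpretation of $\xi$, the key identity $\phi(\eta^{x,x+1})=\phi(\eta)^{j-1,j}$, the conversion of facilitation into exclusion, and the boundary bookkeeping---and all of it checks out.
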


Theorem \ref{thm:micromap} is proved by straightforwardly checking all possible transitions, see also \cite{Goldstein_Lebowitz}.
We omit the details here.

\begin{remark}
Similarly to the classification of FEP in Section \ref{sec:FEPs}, we call respectively \textbf{(SSEP)}, \textbf{(vWASEP)}, \textbf{(WASEP)} and \textbf{(ASEPvv)} the processes generated by $\bb{L}_N$ in different regimes of $\sigma$, $p$ and $\kappa$.
\end{remark}


\subsection{Hydrodynamic limit}
\label{sec:hydrodynamic}

With the map $\phi$ defined in the previous section and the knowledge on the macroscopic behaviours of SEP, we can investigate the hydrodynamic limit for FEP in different regimes.

Let us begin with the PDEs that will be obtained as the limit evolution equation of FEP.
To this end, define two functions $\mf{a}$ and $\mf{h}$ on $\big[\frac12,1\big]$ by
\begin{equation}
    \label{def:functions}
    \mf{a}(r):=\frac{2r-1}{r}, \qquad \mf{h}(r):=\frac{(1-r)(2r-1)}{r}.
\end{equation}
Observe that they are smooth, non-negative and $\mf{h}$ is concave.
Let $\rho^\mathrm{ini}: [0,1]\longrightarrow \big(\frac12 ,1\big]$ be a continuous function.
We list below the initial--boundary problems of three different PDEs.
\begin{itemize}
        \item The fast diffusion equation with Neumann boundary conditions
        \begin{equation}
            \label{FastDiffEqNeumann}
            \left\{
            \begin{aligned}
               &\,\partial_t\rho_t = \sigma\partial_u^2\mf{a}(\rho_t), \quad
               \rho_0 = \rho^\mathrm{ini},\\
               &\,\partial_u\mf{a}(\rho_t)(0)=\partial_u\mf{a}(\rho_t)(1)=0, \quad \forall\,t\in [0,T].
            \end{aligned}
            \right.
        \end{equation}
        \item The convection-diffusion equation with Robin boundary conditions
        \begin{equation}
            \label{Conv-DiffEqRobin}
            \left\{
            \begin{aligned}
               &\,\partial_t\rho_t = \sigma\partial_u^2\mf{a}(\rho_t) - p\partial_u \mf{h}(\rho_t), \quad
               \rho_0 = \rho^\mathrm{ini},\\
               &\left[ \sigma\partial_u\mf{a}(\rho_t)-p\mf{h}(\rho_t) \right] \big|_{u=0,1} = 0, \quad \forall\,t\in [0,T].
            \end{aligned}
            \right.
        \end{equation}
        \item The scalar conservation law on a bounded domain
        \begin{equation}
            \label{ScalarConsLawDirichlet}
            \left\{
            \begin{aligned}
               &\,\partial_t\rho_t + p\partial_u \mf{h}(\rho_t) = 0, \quad
               \rho_0 = \rho^\mathrm{ini},\\
               &\,\rho_t(0) = \frac12, \quad \rho_t(1)=1.
            \end{aligned}
            \right.
        \end{equation}
    \end{itemize}
Note that the notions of solutions we consider are weak solutions for \eqref{FastDiffEqNeumann} and \eqref{Conv-DiffEqRobin}, and entropy solutions for \eqref{ScalarConsLawDirichlet}.
The rigorous definition, uniqueness and some basic regularity of these solutions are stated in Section \ref{appendix}.

Let $\mc{M}_1([0,1])$ be the space of measures on $[0,1]$ with total mass bounded by $1$, equipped with the weak topology.
Define $\pi^N:\Omega_N \longrightarrow \mc{M}_1([0,1])$ to be the map that takes a configuration to its empirical measure:
\begin{equation}
        \pi^N(\eta;\diff u):=\frac1N\sum_{x\in\Lambda_N}\eta_x\delta_\frac{x}{N}(\diff u), \qquad \forall\,\eta\in\Omega_N.
\end{equation}
We say that a distribution $\mu^N$ on $\Omega_N$ is associated with a (deterministic) profile $\rho$ if $\pi^N(\eta)$ converges in distribution to $\rho(u)\diff u$ as $N\to\infty$, or equivalently, for any continuous function $G:[0,1]\longrightarrow\R$ and any $\delta>0$,
\begin{equation}
        \label{eq:association}
        \lim_{N\to+\infty}\mu^N \left( \left|\frac1N\sum_{x\in\Lambda_N}G\Big(\frac xN\Big)\eta_x - \int_0^1G(u)\rho(u) \diff u\right| >\delta\right) =0.
\end{equation}

Let $\mu_0^N$ be a probability measure on $\Omega_N$ and $(\eta (t))_{t\ge 0}$ be the Markov process generated by $\mc{L}_N$ in \eqref{def:genFEP} and the initial distribution $\mu_0^N$.
As the map $\phi$ is well-defined only for ergodic configurations, we assume that $\mu_0^N$ is concentrated on the ergodic component, \textit{i.e.}~$\mu_0^N(\mathcal{E}_N)=1$.
We further assume that $\mu_0^N$ is associated with a density profile $\rho^\mathrm{ini}: [0,1]\longrightarrow \big(\frac12 ,1\big]$ in the sense of \eqref{eq:association}. The existence of a measure satisfying both conditions is not straightforward, but it has been proved in \cite[Section 2.3.1]{DCES} that we can build one if we also require the profile $\rho^\mathrm{ini}$ to be continuous.



For $t\ge0$, denote by $\mu_t^N$ the probability distribution of $\eta(t)$.
We are in position to state the main result of this paper, namely the hydrodynamic limit.

\begin{theorem}
    \label{thm:HLFEPs}
    For any continuous function $G:[0,1]\longrightarrow\R$ and any $\delta >0$,
    \begin{equation*}
        \lim_{N\to +\infty} \mu_t^N \left( \left|\frac1N\sum_{x\in\Lambda_N}G\left(\frac xN\right)\eta_x (t) - \int_0^1G(u)\rho_t(u)\diff u\right|>\delta\right) =0
    \end{equation*}
    for all $t\ge 0$, where $\rho_t(u)$ is
    \begin{itemize}
        \item the unique weak solution to \eqref{FastDiffEqNeumann} for \textbf{\emph{(SFEP)}} and \textbf{\emph{(vWAFEP)}},
        \item the unique weak solution to \eqref{Conv-DiffEqRobin} for \textbf{\emph{(WAFEP)}}, 
        \item the unique entropy solution to \eqref{ScalarConsLawDirichlet} for \textbf{\emph{(AFEPvv)}} if furthermore $\kappa\in\big(\frac12,1\big)$.
    \end{itemize}
\end{theorem}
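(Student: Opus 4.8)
The strategy is to transport everything to the simple exclusion process via the bijection $\phi$ of Theorem~\ref{thm:micromap}, and then to feed in the (assumed) hydrodynamic limits for SEP recorded in Proposition~\ref{prop:HLSEPs}. By Theorem~\ref{thm:micromap}, $\xi(t):=\phi(\eta(t))$ is a simple exclusion process on $\Lambda_M$ with symmetric rate $\sigma$, rightward drift $pN^{-\kappa}$, closed boundaries and acceleration $\Theta_N$, where $M=|\eta(0)|+2$. Since $\mu_0^N$ is associated with $\rho^\mathrm{ini}$, the particle number is conserved and $M/N\to\bar\rho:=\int_0^1\rho^\mathrm{ini}(u)\diff u$; as $M$ and $N$ are comparable, $\Theta_N$ coincides, up to explicit constants coming from the scaling $M\sim\bar\rho N$, with the diffusive (resp.\ hyperbolic) acceleration of a segment of length $M$. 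I would thus obtain a hydrodynamic limit $\beta_t(v)\diff v$ for $\xi(t)$ on $[0,1]$, where $\beta_t$ solves: the heat equation for \textbf{(SSEP)}/\textbf{(vWASEP)}; the viscous Burgers equation for \textbf{(WASEP)}; and, as an entropy solution, the inviscid Burgers equation for \textbf{(ASEPvv)} — each with the zero-flux (closed) boundary condition, and with bulk flux $J=-D\partial_v\beta_t+c\,\beta_t(1-\beta_t)$ whose constants satisfy $D\bar\rho^2=\sigma$ and $c\bar\rho=p$.

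Next I would make explicit the macroscopic map $\beta_t\mapsto\rho_t$. Using the bijection $\gamma_\xi$ of \eqref{def:gammaxi}, for every continuous $G$ one has
\begin{equation*}
\frac1N\sum_{x\in\Lambda_N}G\Big(\tfrac xN\Big)\eta_x(t)=\frac1N\sum_{y=2}^{M-1}G\Big(\tfrac1N\gamma_{\xi(t)}(y)\Big),
\end{equation*}
so the FEP empirical measure is the pushforward of the mass-$\tfrac1N$ counting measure on SEP sites under $y\mapsto\tfrac1N\gamma_{\xi(t)}(y)$. This position function equals $\tfrac MN\big(2\tfrac yM-\tfrac1M\sum_{z<y}\xi_z(t)\big)$, which by the SEP limit converges uniformly to $U_t(v):=\bar\rho\int_0^v(2-\beta_t(w))\diff w$ (uniform convergence of the SEP mass function follows from weak convergence to the continuous limit, by P\'olya's theorem). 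Changing variables $u=U_t(v)$, with $\partial_vU_t=\bar\rho(2-\beta_t)=\bar\rho/\rho_t$, identifies the limit as $\int_0^1G(u)\rho_t(u)\diff u$, where $\rho_t$ is read off pointwise through
\begin{equation*}
\beta_t(v)=\mf{a}(\rho_t(u)),\qquad 2-\beta_t=\tfrac1{\rho_t},\qquad u=U_t(v)
\end{equation*}
(so $\beta_t\in[0,1]$ corresponds to $\rho_t\in(\tfrac12,1]$, the high-density phase). This already yields the convergence asserted in Theorem~\ref{thm:HLFEPs}, with $\rho_t$ equal to the image of $\beta_t$; it remains to identify this image with the announced PDE solution.

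For that identification — the content of Section~\ref{sec:mappingsolutions} — I would check that the macroscopic map sends SEP solutions to FEP solutions. Differentiating the mass identity $\int_0^{U_t(v)}\rho_t(u)\diff u=\bar\rho v$ in time, using $\partial_t\beta_t=-\partial_vJ$ and the closed boundary $J|_{v=0}=0$, gives $\partial_tU_t=\bar\rho J$ and hence, via the FEP continuity equation, the current identity $q=\bar\rho\,\rho_t\,J$ for the FEP flux $q$. The algebraic facts $\mf{a}'(\rho)=\rho^{-2}$, whence $\partial_v\beta_t=(\bar\rho/\rho_t)\,\partial_u\mf{a}(\rho_t)$, together with $\rho\,\beta(1-\beta)=\mf{h}(\rho)$, then collapse $q$ to $-\sigma\partial_u\mf{a}(\rho_t)+p\,\mf{h}(\rho_t)$, so that $\partial_t\rho_t=-\partial_uq$ is precisely \eqref{FastDiffEqNeumann} (when $p=0$) or \eqref{Conv-DiffEqRobin}, and reduces to $q=p\,\mf{h}(\rho_t)$ with \eqref{ScalarConsLawDirichlet} in the hyperbolic case. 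The single boundary condition $J|_{v=0,1}=0$ transfers to $q|_{u=0,1}=0$: in the parabolic regimes this is exactly the Neumann condition $\partial_u\mf{a}(\rho_t)=0$ and the Robin condition $[\sigma\partial_u\mf{a}(\rho_t)-p\,\mf{h}(\rho_t)]|_{u=0,1}=0$; in the hyperbolic regime it forces $\mf{h}(\rho_t)=0$, i.e.\ $\rho_t\in\{\tfrac12,1\}$ at the boundary, and entropy (Otto) admissibility selects $\rho_t(0)=\tfrac12$, $\rho_t(1)=1$.

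The main obstacle is twofold. First, $u=U_t(v)$ is a time-dependent, $\rho$-dependent change of frame (Lagrangian to Eulerian), so the formal computation above must be justified at the level of the relevant notion of solution; for \eqref{FastDiffEqNeumann}–\eqref{Conv-DiffEqRobin} this is a weak-solution argument, but the nonlinear coupling between coordinate and density demands care, particularly in transferring the Neumann/Robin boundary conditions. Second, and harder, is \textbf{(AFEPvv)}: the limit $\beta_t$ is only an entropy solution and develops shocks, so $U_t$ is merely Lipschitz and both the equation and the boundary conditions of Definition~\ref{defin:entropysol}, understood in the sense of \cite{Otto}, must be transported at the level of Kruzhkov entropy inequalities rather than classical derivatives. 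This is also where the extra hypothesis $\kappa\in(\tfrac12,1)$, inherited from the SEP limit in Proposition~\ref{prop:HLSEPs}, enters, to control the vanishing viscosity $\sim M^{\kappa-1}$ that selects the entropy solution.
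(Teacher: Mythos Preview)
Your overall architecture matches the paper's: couple to SEP via $\phi$, invoke Proposition~\ref{prop:HLSEPs}, pull back through the macroscopic map $\Phi$ (your $U_t$ is exactly the paper's $u_t$), and then identify $\rho_t=\Phi^{-1}(\omega_t)$ with the announced PDE solution. Your current-based computation $q=\bar\rho\,\rho_t J$ is an equivalent repackaging of the chain-rule computation in Lemma~\ref{lemma:weaksol}.

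Two points where the paper proceeds differently. First, a technical one you gloss over: $M=|\eta(0)|+2$ is random under $\mu_0^N$, whereas Proposition~\ref{prop:HLSEPs} is stated for a deterministic lattice size. The paper handles this by first proving the result for a \emph{deterministic} sequence $\eta^N\in\mc{E}_N$ with $\pi^N(\eta^N)\to\rho^{\mathrm{ini}}(u)\diff u$, and only then lifts to $\mu_0^N$ via Skorokhod's representation theorem and dominated convergence. Your sentence ``$M/N\to\bar\rho$'' hides this step.

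Second, and more substantively, for \textbf{(AFEPvv)} the paper does \emph{not} attempt to transport Kruzhkov entropy inequalities directly under the Lipschitz change of frame $u=U_t(v)$, as you suggest. Instead it exploits the vanishing-viscosity characterization (Theorem~\ref{thm:existence_entropysol}): since the parabolic approximants $\omega^\varepsilon$ are smooth, the map $\Phi^{-1}$ applies classically and $\rho^\varepsilon:=\Phi^{-1}(\omega^\varepsilon)$ solves a parabolic perturbation of \eqref{ScalarConsLawDirichlet} with diffusion $\varepsilon\,\partial_u^2\mf{a}(\rho^\varepsilon)$; multiplying by $F'(\rho^\varepsilon)$ for a Lax entropy $F$ and letting $\varepsilon\to0$ yields the entropy inequality for $\rho$. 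The Otto boundary conditions are then verified separately by showing, via a family of boundary entropy--flux pairs $(F_{\gamma,q},Q_{\gamma,q})$, that the SEP traces $\omega_t(0),\omega_t(1)\in\{0,1\}$, hence $\rho_t(0),\rho_t(1)\in\{\tfrac12,1\}$, and then checking the sign condition \eqref{eq:OttoBC} by hand. This route sidesteps the delicate issue you flag of doing entropy calculus under a merely Lipschitz, solution-dependent change of variables; your proposed direct transport is plausible but would require substantially more work to make rigorous.
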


\section{Proof of Theorem \ref{thm:HLFEPs}}
\label{sec:proof}

Since the process $(\eta (t))_{t\ge 0}$ starts from the ergodic component and the total number of particle is conserved by the dynamics, $\xi(t)=\phi(\eta(t))$ is a configuration in $\Omega_M^\ell$, where $(M,\ell)=(M,\ell)(\eta(0))$ are random variables given by
\begin{equation*}
M(\eta):=\sum_x\eta_x+2, \qquad \ell(\eta):=2\sum_x\eta_x-N+2.
\end{equation*}
Combining it with Theorem \ref{thm:micromap}, $(\xi(t))_{t\ge0}$ turns out to be a Markov process evolving on the state space $\phi(\mc{E}_N)$ given in \eqref{eq:state-space} with generator
\begin{equation}
\widetilde{\bb{L}}_Nf(\xi):=\Theta_N \left( \sigma\bb{L}_{M(\phi^{-1}(\xi))}^\mathrm{S}+pN^{-\kappa}\bb{L}_{M(\phi^{-1}(\xi))}^\mathrm{TA} \right),
\end{equation}
for any function $f:\phi(\mc{E}_N)\longrightarrow\bb{R}$.
In other words, $(\xi(t))_{t\ge0}$ is the SEP generated by $\bb{L}_N$ given in \eqref{def:genSEP}, living on the lattice space whose size $M=M(\eta(0))$ is random, yet fixed once the evolution starts.

\medskip

\noindent \textit{Proof of Theorem \ref{thm:HLFEPs}.}
We prove the theorem for all $t\in[0,T]$ where $T>0$ is an arbitrarily fixed time horizon.

    First of all, let $(\eta^N)_{N\ge 1}$ be a \emph{deterministic} sequence of \emph{ergodic} configurations (\textit{i.e.}~$\eta^N\in\mc{E}_N$), such that $\pi^N(\eta^N)$ converges to $\rho^\mathrm{ini}(u)\diff u$ weakly in $\mc{M}_1([0,1])$ as $N\to+\infty$.
    Denote by $(\eta(t))_{t\ge0}$ the FEP generated by $\mc{L}_N$ starting from the configuration $\eta^N$, and by $\P_{\eta^N}$ its distribution on $\mc{D}([0,T],\mc{E}_N)$.
    
    By Theorem \ref{thm:micromap}, $\xi(t)=\phi(\eta(t))$ is a SEP in $\Omega_M$ with initial configuration $\xi^M=\phi(\eta^N)$, where $M=M_N := M(\eta^N)$ is a deterministic sequence of integers.
    In the incoming Proposition \ref{prop:macromap}, we prove that the microscopic mapping $\phi$ induces a macroscopic mapping $\Phi$ between density profiles, and that $\pi^M(\xi^M)$ converges in distribution to $\omega^\mathrm{ini}(v)\diff v$ where $\omega^\mathrm{ini}=\Phi (\rho^\mathrm{ini})$.
    Hence, the hydrodynamics of the SEPs that are given in Proposition \ref{prop:HLSEPs} of Section \ref{sec:HLSEPs} yield that
    \begin{equation*}
        \lim_{N\to +\infty} \mathbb{P}_{\eta^N}\left( \left|\frac1M\sum_{y\in\Lambda_M}G\left(\frac yM\right)\xi_y(t) - \int_0^1G(v)\omega_t(v)\diff v\right|>\delta\right) =0,
    \end{equation*}
    for any $t\ge0$, $\delta>0$ and any continuous function $G$, where $\omega_t$ is the solution of some PDE starting from $\omega^\mathrm{ini}$ defined therein.
    Note that we have used the fact that $M/N$ converges to $m:=\int_0^1 \rho^\mathrm{ini}(u)\diff u$, which is a straightforward consequence of the fact that $\pi^N(\eta^N)$ converges weakly to $\rho^\mathrm{ini}(u)\diff u$.

    Repeating backwards the calculations made to prove Proposition \ref{prop:macromap}, we obtain that
    \begin{equation}
    \label{eq:LLNfromdeterministic}
        \lim_{N\to +\infty} \mathbb{P}_{\eta^N}\left( \left|\frac1N\sum_{x\in\Lambda_N}G\left(\frac xN\right)\eta_x(t) - \int_0^1G(u)\rho_t(u)\diff u\right|>\delta\right) =0
    \end{equation}
    with $\rho_t=\Phi^{-1}(\omega_t)$. In Section \ref{sec:mappingsolutions} we prove that the profile $\rho_t$ defined this way satisfies the PDEs \eqref{FastDiffEqNeumann}, \eqref{Conv-DiffEqRobin} or \eqref{ScalarConsLawDirichlet} depending on the value of the parameters, so this concludes the proof of Theorem \ref{thm:HLFEPs} in the case where we start from a deterministic configuration.
    
    It suffices to extend the result to the initial distribution $\mu_0^N$.
    To this end, observe that the fact that $\mu_0^N$ is associated to $\rho^\mathrm{ini}$ means that, as random variables taking values in $\mc{M}_1([0,1])$,
    \begin{equation*}
        \pi^N(\eta) \xrightarrow[N\to +\infty]{} \rho^\mathrm{ini}(u)\diff u\qquad\mbox{ in }\mu_0^N\mbox{-probability}.
    \end{equation*}
    With Skorokhod's representation theorem (see, \emph{e.g.} \cite[Theorem 1.6.7]{Billingsley}), we can construct a sequence of random elements $(\eta^N)_{N\ge 1}$ on a common probability space $(\Sigma ,\mc{F},\mu^\star)$ such that each $\eta^N$ has law $\mu_0^N$, and for which the convergence above holds $\mu^\star$-almost surely. Therefore, for any $\delta >0$ and any continuous function $G$, we have that 
    \begin{multline*}
        \lim_{N\to +\infty} \P_{\mu_0^N}\left( \left|\frac1N\sum_{x\in\Lambda_N}G\left(\frac xN\right)\eta_x(t)-\int_0^1G(u)\rho_t(u)\diff u\right| >\delta\right) \\
        = \lim_{N\to +\infty}\int_{\Omega_N}\P_{\eta}\left( \left|\frac1N\sum_{x\in\Lambda_N}G\left(\frac xN\right)\eta_x(t)-\int_0^1G(u)\rho_t(u)\diff u\right| >\delta\right)\diff\mu_0^N(\eta )\\
         = \lim_{N\to +\infty} \mathtt{E}_{\mu^\star} \left[\P_{\eta^N}\left( \left|\frac1N\sum_{x\in\Lambda_N}G\left(\frac xN\right)\eta_x(t)-\int_0^1G(u)\rho_t(u)\diff u\right| >\delta\right)\right] 
    \end{multline*}
    where $\mathtt{E}_{\mu^\star}$ denotes the expectation with respect to $\mu^\star$. Using \eqref{eq:LLNfromdeterministic} and the dominated convergence theorem, we get that this last limit is equal to 0, and this concludes the proof of Theorem \ref{thm:HLFEPs}.\hfill $\square$

\subsection{Macroscopic mapping}
\label{sec:macromap}

Recall the map $\phi$ defined in Section \ref{sec:micromap} such that $\phi(\mc{E}_N^k) = \Omega_M^\ell$ with $(M,\ell)=(k+2,2k-N+2)$.
In this section, we show that $\phi$ can be lifted up to a macroscopic map that transfers a density profile to another.

Given $\rho:[0,1]\longrightarrow\big[\frac12,1\big]$, define its total mass and the fraction of it on $[0,u]$:
\begin{equation}
\label{def:v}
        m_\rho := \int_0^1\rho (u)\diff u,\qquad v_\rho (u) := \frac{1}{m_\rho}\int_0^u\rho (u')\diff u'.
\end{equation}
As a continuous, strictly increasing function, $v_\rho$ is invertible and we denote its inverse by $u_\rho=u_\rho(v)$. We then define $\omega=\Phi (\rho):[0,1]\to[0,1]$ by
\begin{equation}
\label{def:macromap}
        \omega (v) := \frac{2\rho (u_\rho (v))-1}{\rho (u_\rho (v))}, \qquad \forall\,v\in[0,1].
\end{equation}

\begin{proposition}
\label{prop:macromap}
Fix $k=k_N$ and let $\mu^N$ be a probability distribution on $\mc{E}_N^k$.
Denote by $\nu^M$ the push-forward distribution of $\mu^N$ under $\phi$.
If $\mu^N$ is associated with a profile $\rho:[0,1]\longrightarrow\big(\frac12,1\big]$ in the sense of \eqref{eq:association},
then so is $\nu^M$ with the profile $\omega=\Phi(\rho)$ given by \eqref{def:macromap}.
\end{proposition}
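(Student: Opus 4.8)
The plan is to unfold the push-forward and prove the statement directly in $\mu^N$-probability. By the definition of $\nu^M$ as the image of $\mu^N$ under $\phi$, the assertion that $\nu^M$ is associated with $\omega$ is equivalent to saying that, for every continuous $G:[0,1]\to\R$ and every $\delta>0$, the statistic $\frac1M\sum_{y\in\Lambda_M}G(y/M)\xi_y$, with $\xi=\phi(\eta)$, converges in $\mu^N$-probability to $\int_0^1 G(v)\omega(v)\diff v$. Taking $G\equiv1$ in \eqref{eq:association} forces $k_N/N\to m:=m_\rho$, whence $M/N\to m$; this will be used throughout.

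First I would translate the SEP statistic back into an observable of $\eta$. Writing $\bar\eta$ for the extension with $\bar\eta_0=\bar\eta_N=1$ and $H_x:=\sum_{x'=1}^x\eta_{x'}$ for the cumulative particle count, the numbering used to build $\phi$ identifies the $y$-th particle of $\bar\eta$ with the site $x$ such that $H_x+1=y$ (consistent with \eqref{def:gammaxi}), and gives $\xi_y=\bar\eta_{x+1}$. Reindexing the sum over $y$ by the positions of the particles yields the exact identity $\frac1M\sum_{y}G(y/M)\xi_y=\frac1M\sum_{x=0}^{N-1}\bar\eta_x\bar\eta_{x+1}\,G\big((H_x+1)/M\big)$. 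The crucial simplification comes from ergodicity: since $\eta\in\mc{E}_N^k$ has only isolated empty sites, one has the deterministic linearisation $\bar\eta_x\bar\eta_{x+1}=\bar\eta_x+\bar\eta_{x+1}-1$ for every $x\in\{0,\dots,N-1\}$, which turns the two-point function into a one-point observable that hypothesis \eqref{eq:association} can actually reach.

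Next I would remove the random argument $(H_x+1)/M$ inside $G$. Because $\pi^N(\eta)$ converges weakly to the measure with continuous cumulative function $R(u)=\int_0^u\rho$, P\'olya's theorem upgrades this to uniform convergence of the empirical cumulative function, so that $\sup_x|(H_x+1)/M-v_\rho(x/N)|\to0$ in $\mu^N$-probability (using $M/N\to m$ and $v_\rho=R/m$ from \eqref{def:v}). Since $G$ is uniformly continuous and the total weight $\frac1M\sum_x\bar\eta_x\bar\eta_{x+1}\le N/M$ stays bounded, I may replace $G\big((H_x+1)/M\big)$ by $G(v_\rho(x/N))$ at vanishing cost. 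The resulting expression $\frac1M\sum_{x=0}^{N-1}(\bar\eta_x+\bar\eta_{x+1}-1)G(v_\rho(x/N))$ is a linear functional of $\eta$ with deterministic, continuous coefficients; applying \eqref{eq:association} to the continuous test function $u\mapsto G(v_\rho(u))$, and absorbing the harmless shift $x\mapsto x+1$ together with the boundary terms, gives convergence in probability to $\frac1m\int_0^1(2\rho(u)-1)G(v_\rho(u))\diff u$. The change of variables $v=v_\rho(u)$, for which $\diff v=\rho(u)\,\diff u/m$ and $u=u_\rho(v)$, transforms this into $\int_0^1\frac{2\rho(u_\rho(v))-1}{\rho(u_\rho(v))}G(v)\diff v=\int_0^1 G(v)\omega(v)\diff v$ by \eqref{def:macromap}, which is the claim.

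The main obstacle is the middle step: the argument of the test function is itself the random cumulative count of $\eta$, so one cannot conclude by a single application of \eqref{eq:association}. The resolution is the combination of the deterministic ergodic identity $\bar\eta_x\bar\eta_{x+1}=\bar\eta_x+\bar\eta_{x+1}-1$, which linearises the observable, with the uniform (P\'olya) control of the empirical cumulative function, which lets the continuous $G$ be evaluated at the deterministic profile $v_\rho(x/N)$ up to a negligible error. Once these two ingredients are secured, the remainder is a routine Riemann-sum and change-of-variables computation.
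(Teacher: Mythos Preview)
Your argument is correct and takes a genuinely different route from the paper. The paper proceeds by compactness and identification: it extracts a subsequential limit $\omega$ of the SEP empirical measures, rewrites the \emph{FEP} statistic in terms of $\xi$ via the inverse map $\gamma_\xi$, namely
\[
\frac1N\sum_{x\in\Lambda_N}G\Big(\frac xN\Big)\eta_x
=\frac{M}{N}\cdot\frac1M\sum_{y}G\Big(\frac{M}{N}\cdot\frac1M\sum_{z=1}^{y-1}(2-\xi_z)\Big),
\]
passes to the limit on both sides, and solves the resulting functional relation for $\omega$. You instead compute the SEP statistic directly in terms of $\eta$, using the exact identity
\[
\frac1M\sum_{y}G\Big(\frac yM\Big)\xi_y=\frac1M\sum_{x=0}^{N-1}\bar\eta_x\bar\eta_{x+1}\,G\Big(\frac{H_x+1}{M}\Big),
\]
linearise the two-point factor via the ergodic identity $\bar\eta_x\bar\eta_{x+1}=\bar\eta_x+\bar\eta_{x+1}-1$ (valid precisely on $\mc{E}_N$), and then appeal to the association \eqref{eq:association} with the continuous test function $u\mapsto G(v_\rho(u))$. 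Both arguments ultimately rely on the same P\'olya-type uniform control of the empirical cumulative function to replace the random argument of $G$; you make this step explicit while the paper leaves it tacit. What your approach buys is directness: no subsequence extraction is needed, and the ergodic linearisation is an elegant way to land exactly on a one-point observable. What the paper's approach buys is symmetry: their identity is the one that is literally ``repeated backwards'' in the proof of Theorem~\ref{thm:HLFEPs} to transfer the SEP hydrodynamics to the FEP side, so the same computation serves twice.
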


\begin{remark}
    From \eqref{def:v} and \eqref{def:macromap}, one can see that $u_\rho$ is given by
    \begin{equation}
        \label{def:u}
        u_\rho(v)=m_\rho\int_0^v\big( 2-\omega(v')\big)\diff v'.
    \end{equation}
    It is then straightforward to see that the inverse $\rho=\Phi^{-1}(\omega)$ is given by
    \begin{equation}
    \rho(u)=\frac{1}{2-\omega(v_\rho(u))}, \qquad \forall\,u\in[0,1].
    \end{equation}
\end{remark}

\begin{proof}
Note that $\xi=\phi(\eta)\in\Omega_M^\ell$ where $M=M_N=k_N+2$ and $\ell=\ell_N=2k_N-N+2$ are both deterministic sequences, and as a consequence of the association of $\mu^N$ to $\rho^\mathrm{ini}$, we have that
\begin{equation*}
    \frac MN\xrightarrow[N\to +\infty]{} m_\rho .
\end{equation*}
Since the sequence of measures $(\pi^M(\xi))$ indexed by $N$ is relatively compact in $\mc{M}_1([0,1])$, it admits weak limit points.
As there is at most one particle per site, any weak limit point is a measure which is absolutely continuous with respect to the Lebesgue measure.
Thus, there exists a measurable function $\omega =\omega (v)$ for which, up to an extraction of subsequence,
\begin{equation*}
\lim_{N\to+\infty}\nu^M \left( \left|\frac1M\sum_{y\in\Lambda_M}G\Big(\frac yM\Big)\xi_y - \int_0^1G(v)\omega(v) \diff v\right| >\delta\right) =0
\end{equation*}
for any continuous $G$ and any $\delta>0$. It suffices to show that $\omega$ is given by \eqref{def:macromap}.
Observe that
    \begin{align*}
        \frac1N \sum_{x\in\Lambda_N}G\left(\frac xN\right) \eta_x = \frac1N\sum_{x\in\mathrm{Part}(\eta)}G\left(\frac xN\right) &= \frac1N\sum_{y\in\Lambda_M\setminus\{1\}}G\left(\frac{\gamma_\xi (y)}{N}\right)\\
        &= \frac MN\frac1M\sum_{y\in\Lambda_M\setminus\{1\}}G\left(\frac MN\frac1M \sum_{z=1}^{y-1}(2-\xi_z)\right),
    \end{align*}
    where $\gamma_\xi$ is the bijection defined in \eqref{def:gammaxi}.
    Letting $N$ go to $+\infty$ along the proper subsequence in this equality yields that
    \begin{equation*}
        \int_0^1G(u)\rho(u)\diff u  = m_\rho \int_0^1 G\left( m_\rho\int_0^v \big( 2-\omega(v')\big)\diff v'\right)\diff v.
    \end{equation*}
    By performing the change of variable $u=u_\rho(v)$ defined in \eqref{def:u}, the right-hand side is equal to 
    \begin{equation*}
        \int_0^1 \frac{G(u)}{2-\omega(v_\rho(u))}\diff u.
    \end{equation*}
    As this holds true for any continuous function $G$, the relation between $\omega$ and $\rho$ follows.
    \hfill$\square$
\end{proof}





\subsection{Hydrodynamic limit for SEP}
\label{sec:HLSEPs}

Suppose that $M=M_N$ is a deterministic sequence such that $M/N$ converges to some $m\in [0,1]$ as $N\to +\infty$.
Let $(\xi (t))_{t\ge 0}$ be the SEP starting from some initial distribution $\nu_0^M$ and driven by the generator $\mathbb{L}_N$ defined in \eqref{def:genSEP}. We denote by $\mathbf{P}_{\nu_0^M}$ the probability measure that it induces on the Skorokhod space $\mathcal{D}([0,T],\Omega_M)$.
We collect in the next proposition the hydrodynamic limit for $\xi(t)$ in different parameter regimes.

\begin{proposition}[Hydrodynamic limits for the SEPs]
    \label{prop:HLSEPs}
    Assume that $\nu_0^M$ is associated with a measurable function $\omega^\mathrm{ini}:[0,1]\longrightarrow[0,1]$ in the sense of \eqref{eq:association}.
    Then, for any $\delta >0$ and any continuous function $G$, 
    \begin{equation*}
        \lim_{N\to +\infty} \mathbf{P}_{\nu_0^M}\left( \left|\frac1M\sum_{y\in\Lambda_M}G\left(\frac yM\right)\xi_y (t) - \int_0^1G(v)\omega_t(v)\diff v\right|>\delta\right) =0
    \end{equation*}
    for all $t\in[0,T]$, where $\omega_t=\omega_t(v)$ is given below.
    \begin{itemize}
        \item For \textbf{\emph{(SSEP)}} and \textbf{\emph{(vWASEP)}}, $\omega_t$ is the unique weak solution to the heat equation with Neumann boundary conditions
        \begin{equation}
            \label{HeatEqNeumann}
            \left\{
            \begin{aligned}
               &\,\partial_t\omega_t = \dfrac{\sigma}{m^2}\partial_v^2\omega_t, \quad
               \omega_0=\omega^\mathrm{ini},\\
               &\,\partial_v\omega_t(0)=\partial_v\omega_t(1)=0, \quad \forall\,t\in [0,T].
            \end{aligned}
            \right.
        \end{equation}
        \item For \textbf{\emph{(WASEP)}}, $\omega_t$ is the unique weak solution to the viscous Burgers equation with Robin boundary conditions
        \begin{equation}
            \label{ViscBurgersEqRobin}
            \left\{
            \begin{aligned}
               &\,\partial_t\omega_t = \dfrac{\sigma}{m^2}\partial_v^2\omega_t -\dfrac pm\partial_v J(\omega_t), \quad
               \omega_0=\omega^\mathrm{ini},\\
               &\left[\dfrac{\sigma}{m^2}\partial_v\omega_t-\dfrac pmJ(\omega_t)\right]\Big|_{v=0,1} = 0, \quad \forall\,t\in [0,T],
            \end{aligned}
            \right.
        \end{equation}
        where $J(\omega)=\omega(1-\omega)$ is the macroscopic current.
        \item For \textbf{\emph{(ASEPvv)}} with $\kappa\in(\frac12,1)$, $\omega_t$ is the unique entropy solution to the Burgers equation on bounded domain
        \begin{equation}
            \label{BurgersEqDirichlet}
            \left\{
            \begin{aligned}
               &\,\partial_t\omega_t +\dfrac pm\partial_v J(\omega_t) = 0, \quad
               \omega_0=\omega^\mathrm{ini},\\
               &\,\omega_t(0)=0, \quad \omega_t(1)=1.
            \end{aligned}
            \right.
        \end{equation}
    \end{itemize}
\end{proposition}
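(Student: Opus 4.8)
The plan is to treat $(\xi(t))_{t\ge0}$ as a bulk-conservative exclusion process on the segment $\Lambda_M$ with reflecting walls, and to match each regime to an already-established hydrodynamic limit once the effective time scale is identified. The first step is pure bookkeeping: rewrite the generator \eqref{def:genSEP} by factoring out the diffusive and hyperbolic scalings natural to $\Lambda_M$,
\begin{equation*}
\bb{L}_N = \frac{\sigma\Theta_N}{M^2}\,\big(M^2\bb{L}_M^{\mathrm{S}}\big) + \frac{p\,\Theta_N N^{-\kappa}}{M}\,\big(M\bb{L}_M^{\mathrm{TA}}\big),
\end{equation*}
and let $N\to\infty$ using $M/N\to m$. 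In the diffusive case $\Theta_N=N^2$ the symmetric prefactor tends to $\sigma/m^2$, while the asymmetric prefactor $pN^{2-\kappa}/M$ tends to $0$ when $\kappa>1$ (so (vWASEP) yields the same heat equation as (SSEP)) and to $p/m$ when $\kappa=1$ (giving (WASEP)). In the hyperbolic case $\Theta_N=N^{1+\kappa}$ the asymmetric prefactor tends to $p/m$ while the symmetric one, of order $\sigma N^{\kappa-1}/m^2$, vanishes. This already fixes the coefficients appearing in \eqref{HeatEqNeumann}, \eqref{ViscBurgersEqRobin} and \eqref{BurgersEqDirichlet}.

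For the diffusive regimes (SSEP), (vWASEP) and (WASEP) I would run the entropy method of Guo--Papanicolaou--Varadhan. First I would establish tightness of the empirical measures in $\mc{D}([0,T],\mc{M}_1([0,1]))$; the exclusion rule makes every limit point absolutely continuous. Then I write Dynkin's martingale for the density field $M^{-1}\sum_y G(y/M)\xi_y$: two discrete summations by parts send the symmetric part to the diffusive contribution $\tfrac{\sigma}{m^2}\partial_v^2\omega$, and the vanishing of the current across the walls yields exactly the Neumann data of \eqref{HeatEqNeumann}. The weak drift present in (WASEP) contributes the current $J(\omega)=\omega(1-\omega)$ once the one-block estimate replaces $\xi_y(1-\xi_{y+1})$ by a function of the local density, and the boundary terms then assemble into the Robin condition of \eqref{ViscBurgersEqRobin}; the two-block estimate closes the identification. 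Uniqueness of the weak solutions, recorded in Section \ref{appendix}, pins down $\omega_t$ and promotes the convergence to convergence in probability.

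The hard part will be (ASEPvv), where the limit is an \emph{entropy} solution of the inviscid equation \eqref{BurgersEqDirichlet} and the entropy method alone is insufficient. Here I would lean on the attractiveness of the exclusion process and adapt Rezakhanlou's coupling scheme to the bounded segment, so as to produce in the limit the Kru\v{z}kov entropy inequalities in the bulk together with the boundary data $\omega_t(0)=0$, $\omega_t(1)=1$ read in the Bardos--LeRoux--N\'ed\'elec/Otto sense of Definition \ref{defin:entropysol}. The residual symmetric part, of order $N^{\kappa-1}$ on the diffusive scale, acts as a vanishing viscosity that selects the correct solution yet must still be controlled quantitatively; this is precisely where the restriction $\kappa>\tfrac12$ enters, guaranteeing that the viscous contribution to the entropy production stays nonnegative in the limit while the drift alone governs the macroscopic flux. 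The main obstacle is thus twofold: to derive the \emph{boundary} entropy inequalities in the strong sense of Definition \ref{defin:entropysol}---rather than a mere weak formulation---and to handle the competition between the hyperbolic drift and the vanishing viscosity near the walls. Once every limit point is identified as the entropy solution, the uniqueness statement for \eqref{BurgersEqDirichlet} in Section \ref{appendix} concludes the proof.
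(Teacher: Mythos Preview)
Your outline is methodologically sound, but it takes a far more laborious route than the paper. The paper does not redo any hydrodynamic analysis: it observes that simple exclusion is \emph{attractive}, invokes Kern's result \cite{Kern} that for attractive particle systems the closed-boundary dynamics and the slow-reservoir dynamics are exponentially close (at the large-deviation level, hence certainly at the hydrodynamic level), and then simply cites the already-proved hydrodynamic limits for SEP with slow reservoirs: \cite{BMNS17} for \textbf{(SSEP)}, \cite{CapitaoGoncalves} for \textbf{(vWASEP)} and \textbf{(WASEP)}. For \textbf{(ASEPvv)} it cites \cite{Xu_hydrodynamics} directly, which already treats the bounded domain with the boundary entropy formulation and contains the restriction $\kappa>\tfrac12$. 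Your rescaling computation identifying the coefficients $\sigma/m^2$ and $p/m$ is exactly right and is the only genuine work needed beyond the citations.

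By contrast, you propose to rerun the Guo--Papanicolaou--Varadhan machinery and, for the hyperbolic case, to adapt Rezakhanlou's coupling to a segment with reflecting walls. This would work in principle, but the boundary-entropy part you flag as ``the main obstacle'' is precisely what \cite{Xu_hydrodynamics} has already carried out; reproducing it would be a paper in itself. Also, your heuristic for $\kappa>\tfrac12$ (``the viscous contribution to the entropy production stays nonnegative'') is not quite the mechanism: in \cite{Xu_hydrodynamics} the constraint arises from the relative-entropy estimate controlling the boundary blocks, not from a sign condition on the viscosity. So your plan is a correct from-scratch program, but the paper's proof is essentially a two-line reduction to the literature via attractiveness.
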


\begin{remark}
    The factors $m$ that appear in these equations come from the assumption that $M/N$ converges to $m$, and the fact that the generator $\bb{L}_N$ defined in \eqref{def:genSEP} is accelerated by powers of $N$, whereas its natural scaling parameter is $M$.
\end{remark}

\begin{proof}
    The proof of these results are scattered in different papers.
    We summarize them here.
    
    A remarkable feature of the simple exclusion processes is the \emph{attractiveness}.
    It allows us to apply the result in \cite{Kern}, which states that the closed dynamics of an attractive system is exponentially close to the same system in contact with slow reservoirs, in the sense of large deviations.
    In particular, they share the same hydrodynamic limit.
    Hence, the results for \textbf{(SSEP)}, \textbf{(vWASEP)} and \textbf{(WASEP)} are direct corollaries of those proved for the slow boundaries:
    \begin{itemize}
        \item for \textbf{(SSEP)} it follows from the case $\theta>1$ of \cite[Theorem 2.8]{BMNS17},
        \item for \textbf{(vWASEP)} it follows from the case $\gamma>1$ of \cite[Theorem 1]{CapitaoGoncalves}, and
        \item for \textbf{(WASEP)} it follows from the case $\gamma =1$, $\theta>1$, $\delta >1$ of \cite[Theorem 1]{CapitaoGoncalves}.
    \end{itemize}
    For \textbf{(ASEPvv)}, the result is included in \cite[Theorem 2.8]{Xu_hydrodynamics}. \hfill$\square$
\end{proof}

\section{Mapping of solutions}
\label{sec:mappingsolutions}

We begin with proving that weak solutions are mapped to weak solutions through the mapping $\Phi$. As the proof is more tedious for entropy solutions, we will do it afterwards.

Let $\omega_t(\cdot )$ be defined as in Proposition \ref{prop:HLSEPs}, and let $\rho_t:=\Phi^{-1}(\omega_t)$ for all $t\in [0,T]$, where $\Phi$ is the macroscopic mapping defined through \eqref{def:macromap} in Section \ref{sec:macromap}. We recall that this is a one-to-one mapping and then we have the relations
\begin{equation}
    \label{def:map_rhot_omegat}
    \rho_t(u)=\frac{1}{2-\omega_t(v_t(u))}\qquad\Longleftrightarrow\qquad \omega_t(v)=\frac{2\rho_t(u_t(v))-1}{\rho_t(u_t(v))}=\mf{a}(\rho_t)(u_t(v))
\end{equation}
where
\begin{equation}
    \label{def:ut_vt}
    v_t(u)=\frac1m\int_0^u\rho_t(u')\diff u',\qquad u_t(v)=m\int_0^v\big( 2-\omega_t(v')\big)\diff v'
\end{equation}
and $m$ is the total mass of the initial profile.

\subsection{For weak solutions}

\begin{lemma}[Mapping of weak solutions]
    \label{lemma:weaksol}
    \begin{enumerate}
    \item[(i)] The unique weak solution to the heat equation with Neumann boundary conditions \eqref{HeatEqNeumann} is mapped to the unique weak solution to the fast diffusion equation with Neumann boundary conditions \eqref{FastDiffEqNeumann} through the mapping $\Phi$.
    \item[(ii)] The unique weak solution to the viscous Burgers equations with Robin boundary conditions \eqref{ViscBurgersEqRobin} is mapped to the unique weak solution to the convection-diffusion equation with Robin boundary conditions \eqref{Conv-DiffEqRobin} through the mapping $\Phi$.
    \end{enumerate}
\end{lemma}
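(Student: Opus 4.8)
The plan is to prove (i) and (ii) simultaneously, viewing (i) as the case $p=0$ of (ii), and to reduce the weak formulation to a pointwise (classical) identity. First I would record that the SEP equations \eqref{HeatEqNeumann} and \eqref{ViscBurgersEqRobin} have \emph{constant} diffusion coefficient $\sigma/m^2$ and a smooth drift, so by classical parabolic theory their weak solution $\omega_t$ is smooth on $(0,T]\times[0,1]$ (up to the boundary, with the Neumann/Robin conditions), while comparison with the stationary solutions $\omega\equiv0$ and $\omega\equiv1$ (both of which satisfy the respective boundary conditions) keeps $\omega_t\in[0,1]$. Hence $2-\omega_t\in[1,2]$ and, by \eqref{def:map_rhot_omegat}, $\rho_t:=\Phi^{-1}(\omega_t)$ is smooth with values in $[\tfrac12,1]$, and $\rho_0=\rho^{\mathrm{ini}}$ by continuity of $\Phi^{-1}$. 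It then suffices to check that $\rho_t$ solves \eqref{FastDiffEqNeumann} (resp.\ \eqref{Conv-DiffEqRobin}) classically: a smooth solution is a weak solution, and the uniqueness proved in Section~\ref{appendix} identifies it with the unique weak one.

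Next I would collect the algebraic identities attached to the (time-dependent) change of variables $u=u_t(v)$ of \eqref{def:ut_vt}, with inverse $v=v_t(u)$. The relations \eqref{def:map_rhot_omegat} read $\mf{a}(\rho_t)(u)=\omega_t(v_t(u))$, and differentiating \eqref{def:ut_vt} gives $\partial_v u_t=m(2-\omega_t)$ and $\partial_u v_t=\rho_t/m$. A one-variable computation from \eqref{def:functions} yields $1-\mf{a}(r)=(1-r)/r$, hence $\mf{h}(r)=r\,J(\mf{a}(r))$, i.e.\ $\mf{h}(\rho_t)(u)=\rho_t(u)\,J(\omega_t(v_t(u)))$. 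Differentiating $\mf{a}(\rho_t)=\omega_t\circ v_t$ and $\rho_t=(2-\omega_t\circ v_t)^{-1}$ in $u$ gives (with $\circ\,v_t$ meaning evaluation at $v_t(u)$)
\[ \partial_u\mf{a}(\rho_t)=\tfrac{\rho_t}{m}\,(\partial_v\omega_t)\circ v_t, \qquad \partial_u\rho_t=\tfrac{\rho_t^{3}}{m}\,(\partial_v\omega_t)\circ v_t, \]
and a second differentiation expresses $\partial_u^2\mf{a}(\rho_t)$ and $\partial_u\mf{h}(\rho_t)$ purely in terms of $\omega_t,\partial_v\omega_t,\partial_v^2\omega_t$ and $\rho_t$.

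The only genuine obstacle is that $u_t$ depends on $t$, so differentiating $\rho_t(u)$ at fixed $u$ is circular; I would resolve it by working in the Lagrangian (mass) coordinate $v$. Differentiating $u_t(v)=m\int_0^v(2-\omega_t)\diff v'$ in $t$, inserting the SEP equation and integrating, the boundary term at $v'=0$ is exactly the Neumann/Robin flux and vanishes, leaving the clean identity
\[ \partial_t u_t(v)=-\tfrac{\sigma}{m}\,\partial_v\omega_t(v)+p\,J(\omega_t(v)) \]
(with $p=0$ in the Neumann case). Setting $R(t,v):=\rho_t(u_t(v))=(2-\omega_t(v))^{-1}$, one has $\partial_t R=R^2\partial_t\omega_t$ on one hand and $\partial_t R=(\partial_t\rho_t)\circ u_t+(\partial_u\rho_t)\circ u_t\cdot\partial_t u_t$ on the other; solving for $(\partial_t\rho_t)\circ u_t$ and substituting the displayed identity for $\partial_t u_t$ together with the spatial derivative formulas, the second-order terms reassemble into $\sigma\,\partial_u^2\mf{a}(\rho_t)$ and the first-order terms into $-p\,\partial_u\mf{h}(\rho_t)$, so that $\partial_t\rho_t=\sigma\partial_u^2\mf{a}(\rho_t)-p\,\partial_u\mf{h}(\rho_t)$ at every $u=u_t(v)$, hence on all of $[0,1]$.

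Finally I would verify the boundary conditions. Since the SEP equation conserves $\int_0^1\omega_t$, total mass is preserved, $u_t$ is a bijection of $[0,1]$, and $v_t(0)=0$, $v_t(1)=1$. The spatial formulas give
\[ \sigma\,\partial_u\mf{a}(\rho_t)-p\,\mf{h}(\rho_t)=m\,\rho_t\Big[\tfrac{\sigma}{m^{2}}\partial_v\omega_t-\tfrac{p}{m}J(\omega_t)\Big]\circ v_t, \]
whose right-hand side vanishes at $u=0,1$ because the bracket is the SEP boundary flux, null at $v=0,1$; when $p=0$ this reduces to $\partial_u\mf{a}(\rho_t)(0)=\partial_u\mf{a}(\rho_t)(1)=0$. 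Thus $\rho_t$ is a classical, hence weak, solution of \eqref{FastDiffEqNeumann} (resp.\ \eqref{Conv-DiffEqRobin}) with datum $\rho^{\mathrm{ini}}$, and uniqueness from Section~\ref{appendix} concludes. The crux of the argument, and the step I expect to require the most care, is the Lagrangian identity for $\partial_t u_t$, which is what tames the time dependence of the mapping $\Phi$.
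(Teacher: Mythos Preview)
Your proposal is correct and follows essentially the same route as the paper: treat (i) as the $p=0$ case of (ii), compute the key identity $\partial_t u_t(v)=-\tfrac{\sigma}{m}\partial_v\omega_t(v)+pJ(\omega_t(v))$ by integrating the SEP equation (using that the Robin flux at $v=0$ vanishes), then transport the chain-rule computations through the change of variables to recover the FEP equation and its boundary conditions. The paper organises the calculation in the reverse direction---expressing $\partial_v\omega_t$, $\partial_v^2\omega_t$, $\partial_vJ(\omega_t)$ in terms of $\rho_t$-derivatives and substituting into \eqref{ViscBurgersEqRobin}---whereas you express $\partial_u\mf a(\rho_t)$, $\partial_u\rho_t$ in terms of $\omega_t$-derivatives and work via $R(t,v)=\rho_t(u_t(v))$; these are equivalent bookkeepings of the same computation. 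You are in fact a bit more careful than the paper on two points it leaves implicit: the parabolic regularity needed to treat the weak solution $\omega_t$ as classical, and the explicit vanishing of the boundary term at $v'=0$ in the derivation of $\partial_t u_t$.
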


\begin{proof}
    We only prove \textit{(ii)} as the proof of \textit{(i)} is the same, but taking $p=0$. Assume that $\omega$ is the weak solution to \eqref{ViscBurgersEqRobin} and define $\rho$ through the formulas \eqref{def:map_rhot_omegat} and \eqref{def:ut_vt}. On the one hand, we have that
    \begin{align}
        \partial_v\omega_t(v)=\partial_v \big(\mf{a}\circ\rho_t\circ u_t (v)\big) & = \partial_u\mf{a}(\rho_t)(u_t(v))\times\partial_vu_t(v) \notag \\
        & = \partial_u\mf{a}(\rho_t)(u_t(v))\times m (2-\omega_t(v)) \notag\\
        & = m\frac{\partial_u\mf{a}(\rho_t)}{\rho_t}(u_t(v)) \label{eq:dvomega}
    \end{align}
    and then
    \begin{align}
        \partial_v^2\omega_t (v) & = m\partial_vu_t(v)\times \partial_u\left(\frac{\partial_u\mf{a}(\rho_t)}{\rho_t}\right) (u_t(v)) \notag \\
        & = m^2\left( \frac{\partial_u^2\mf{a}(\rho_t)}{\rho_t^2}-\frac{\partial_u\mf{a}(\rho_t)\partial_u\rho_t}{\rho_t^3}\right) (u_t(v)). \label{eq:dv2omega}
    \end{align}
    On the other hand, we have
    \begin{align*}
        \partial_t\omega_t (v) = \partial_t\big( \mf{a}\circ\rho_t\circ u_t(v)\big) & = \mf{a}'(\rho_t(u_t(v)))\times\partial_t(\rho_t(u_t(v)))\\
        & = \frac{1}{\rho_t(u_t(v))^2} \big( \partial_t\rho_t(u_t(v))+\partial_u\rho_t(u_t(v))\partial_tu_t(v)\big),
    \end{align*}
    but using the equation \eqref{ViscBurgersEqRobin},
    \begin{align*}
        \partial_tu_t(v) = -m\int_0^v \partial_t\omega_t(v')\diff v' & = -m\int_0^v\left( \frac{\sigma}{m^2}\partial_v^2\omega_t(v')-\frac pm\partial_v J(\omega_t)(v')\right)\diff v'\\
        & = pJ(\omega_t)(v)-\frac{\sigma}{m}\partial_v\omega_t(v)
    \end{align*}
    An easy computation shows that 
    \begin{equation}
        \label{eq:dvJ(omega)}
        J(\omega_t)(v)=\frac{\mf{h}(\rho_t)}{\rho_t}(u_t(v))\quad\mbox{ so }\quad \partial_vJ(\omega_t)(v)=\frac{m}{\rho_t}\partial_u\left( \frac{\mf{h}(\rho_t)}{\rho_t}\right) (u_t(v)).
    \end{equation}
    As a consequence, using \eqref{eq:dvomega},
    \begin{equation*}
        \partial_tu_t(v) = \frac{p\mf{h}(\rho_t)-\sigma\mf{a}(\rho_t)}{\rho_t}(u_t(v))
    \end{equation*}
    and hence
    \begin{equation}
        \label{eq:dtomega}
        \partial_t\omega_t(v)= \left( \frac{\partial_t\rho_t}{\rho_t^2}+ \frac{p\mf{h}(\rho_t)-\sigma\partial_u\mf{a}(\rho_t)}{\rho_t^3}\partial_u\rho_t\right) (u_t(v)).
    \end{equation}
    If we put \eqref{eq:dv2omega}, \eqref{eq:dvJ(omega)} and \eqref{eq:dtomega} together in equation \eqref{ViscBurgersEqRobin}, we get that 
    \begin{equation*}
        \frac{\partial_t\rho_t}{\rho_t^2} + \frac{p\mf{h}(\rho_t)-\sigma\partial_u\mf{a}(\rho_t)}{\rho_t^3}\partial_u\rho_t = \sigma\left( \frac{\partial_u^2\mf{a}(\rho_t)}{\rho_t^2}-\frac{\partial_u\mf{a}(\rho_t)\partial_u\rho_t}{\rho_t^3}\right) -\frac{p}{\rho_t}\partial_u\left(\frac{\mf{h}(\rho_t)}{\rho_t}\right)
    \end{equation*}
    and this rewrites exactly 
    \begin{align*}
        \partial_t\rho_t & = \sigma \partial_u^2\mf{a}(\rho_t) - p\left( \rho_t\partial_u\left(\frac{\mf{h}(\rho_t)}{\rho_t}\right) +\frac{\mf{h}(\rho_t)}{\rho_t}\partial_u\rho_t\right) \\
        & = \sigma\partial_u^2\mf{a}(\rho_t) - p\partial_u\mf{h}(\rho_t)
    \end{align*}
    as expected. By \eqref{eq:dvomega} and \eqref{eq:dvJ(omega)}, the Robin boundary conditions rewrite exactly as expected also. \hfill$\square$.
\end{proof}

\subsection{For entropy solutions}

\begin{lemma}[Mapping of entropy solutions]
\label{lemma:entropysol}
    The unique entropy solution $\omega$ to the Burgers equation with Dirichlet boundary conditions \eqref{BurgersEqDirichlet} is mapped to the unique entropy solution $\rho$ to the scalar conservation law with Dirichlet boundary conditions \eqref{ScalarConsLawDirichlet} through the mapping $\Phi$.
\end{lemma}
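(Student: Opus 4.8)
The plan is to realise $\rho=\Phi^{-1}(\omega)$ as a vanishing-viscosity limit, so that the entropy statement reduces to the smooth computation already carried out in Lemma \ref{lemma:weaksol}. Recall (cf.\ Definition \ref{defin:entropysol} and \cite{Otto}) that the entropy solution $\omega$ of \eqref{BurgersEqDirichlet} is the $\epsilon\downarrow0$ limit of the smooth solutions $\omega^\epsilon$ of the uniformly parabolic Dirichlet problem
\begin{equation*}
\partial_t\omega^\epsilon+\tfrac pm\,\partial_vJ(\omega^\epsilon)=\epsilon\,\partial_v^2\omega^\epsilon,\qquad \omega^\epsilon_0=\omega^\mathrm{ini},\qquad \omega^\epsilon_t(0)=0,\quad \omega^\epsilon_t(1)=1.
\end{equation*}
The maximum principle gives $\omega^\epsilon\in[0,1]$, hence $\rho^\epsilon:=\Phi^{-1}(\omega^\epsilon)=1/(2-\omega^\epsilon)\in[\tfrac12,1]$, and along this range $\mf a$ is a smooth diffeomorphism with $\mf a'(\rho^\epsilon)=(\rho^\epsilon)^{-2}\in[1,4]$ bounded away from $0$.

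Next I would rerun the bulk computation of Lemma \ref{lemma:weaksol}(ii) with the viscosity $\sigma/m^2$ replaced by $\epsilon$. Since that computation is purely local, it shows that $\rho^\epsilon$ solves the regularized conservation law
\begin{equation*}
\partial_t\rho^\epsilon+p\,\partial_u\mf h(\rho^\epsilon)=\epsilon m^2\,\partial_u^2\mf a(\rho^\epsilon),
\end{equation*}
which, thanks to $\mf a'\ge1$, is uniformly parabolic on the range of $\rho^\epsilon$. Evaluating $\rho^\epsilon=1/(2-\omega^\epsilon)$ at the two endpoints (where the endpoints of $u_t^\epsilon$ satisfy $u_t^\epsilon(0)=0$ and $u_t^\epsilon(1)\to1$) converts the Dirichlet data $0,1$ of $\omega^\epsilon$ into the Dirichlet data $\mf a^{-1}(0)=\tfrac12$ and $\mf a^{-1}(1)=1$ of $\rho^\epsilon$; thus $\rho^\epsilon$ is exactly a non-degenerate viscous approximation of \eqref{ScalarConsLawDirichlet}.

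It then remains to let $\epsilon\downarrow0$. Because $\mf h$ is genuinely nonlinear on $[\tfrac12,1]$ (indeed $\mf h''=-2/\rho^3<0$), Oleinik-type one-sided bounds furnish uniform $BV$ estimates and hence $L^1$ compactness of $(\rho^\epsilon)$; continuity of $\Phi^{-1}$ for this convergence (transparent from \eqref{def:map_rhot_omegat}--\eqref{def:ut_vt}, $u_t$ being bi-Lipschitz since $2-\omega\ge1$ and $m$ fixed) forces every limit point to be $\rho=\Phi^{-1}(\omega)$, and uniqueness of the entropy solution of \eqref{ScalarConsLawDirichlet} (Section \ref{appendix}) removes the need for full-sequence convergence. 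For the interior entropy inequalities one runs the classical viscous argument: testing the equation for $\rho^\epsilon$ against $S'(\rho^\epsilon)$ for a convex entropy $S$ produces a perfect-derivative term plus the dissipation $-\epsilon m^2\,S''(\rho^\epsilon)\,\mf a'(\rho^\epsilon)\,(\partial_u\rho^\epsilon)^2\le0$, whose sign (guaranteed by $\mf a'>0$) yields $\partial_tS(\rho)+p\,\partial_uQ(\rho)\le0$ in the limit, with $Q'=S'\mf h'$.

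The main obstacle is the boundary behaviour in the sense of Otto, not the interior inequality. I would establish the Otto boundary condition for the limit $\rho$ by analysing the viscous boundary terms of $\rho^\epsilon$, using that a non-degenerate viscosity selects the same boundary (BLN) admissibility as the standard Laplacian regularization, independently of the nonlinear diffusion $\mf a$. Two points require care. First, the $\epsilon$-level mass and domain drift, namely that $u_t^\epsilon(1)=2m-m\int_0^1\omega^\epsilon_t\diff v$ need not equal $1$; this is controlled because both endpoints are \emph{inflow} (the characteristic speeds $J'(0)=1>0$ and $J'(1)=-1<0$, equivalently $p\mf h'(\tfrac12)>0$ and $p\mf h'(1)<0$, point into the domain), so no strong boundary layer forms, the viscous boundary flux vanishes as $\epsilon\downarrow0$, and $u_t^\epsilon(1)\to1$. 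Second, the time dependence of the change of variables $u=u_t(v)$ produces the cross term $\partial_tu_t=p\,\mf h(\rho)/\rho$ that must be matched against the flux in the boundary terms; here the monotonicity of $\mf a$ (so that $\operatorname{sgn}(\omega-\mf a(d))=\operatorname{sgn}(\rho-d)$) is what aligns the Kruzhkov constants on the two sides. Once both the interior and boundary entropy conditions are established, uniqueness identifies $\rho$ with the entropy solution of \eqref{ScalarConsLawDirichlet}, concluding the proof.
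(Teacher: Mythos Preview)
Your vanishing--viscosity strategy for the interior entropy inequality is exactly the paper's: map $\omega^\varepsilon$ to $\rho^\varepsilon=\Phi^{-1}(\omega^\varepsilon)$, rerun the Lemma~\ref{lemma:weaksol} computation to see that $\rho^\varepsilon$ solves a nondegenerate parabolic regularisation of \eqref{ScalarConsLawDirichlet}, multiply by $F'(\rho^\varepsilon)$, throw away the nonnegative dissipation $\varepsilon F''(\rho^\varepsilon)\mf a'(\rho^\varepsilon)(\partial_u\rho^\varepsilon)^2$, and pass to the limit against compactly supported test functions. The paper also proves $\rho^\varepsilon\to\rho$ in $L^1$ directly from $\omega^\varepsilon\to\omega$ and the bi-Lipschitz nature of $u_t$, so your Oleinik/BV detour is unnecessary.

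Where your argument has a real gap is the Otto boundary condition. You try to extract it from the viscous boundary terms of $\rho^\varepsilon$, relying on the assertions that ``both endpoints are inflow so no boundary layer forms'' and that ``the viscous boundary flux vanishes as $\varepsilon\downarrow0$''. Neither is justified: the inflow claim presumes the limit trace already equals the prescribed datum, which is precisely what you must prove, and controlling $\varepsilon\partial_u\mf a(\rho^\varepsilon)$ at the boundary would require a genuine boundary-layer analysis you do not carry out. The domain-drift issue $u_t^\varepsilon(1)\ne1$ that you flag is a symptom of the same difficulty.

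The paper bypasses all of this by not passing the boundary condition through the viscous limit at all. It works directly with the \emph{limit} $\omega$, which is already known to satisfy Otto's condition. Using the concrete boundary entropy--flux pairs $(\mc F_\gamma,\mc Q_\gamma)$ and sending $\gamma\to\infty$, one obtains $J(\omega_t(0))\mathds 1_{\omega_t(0)\ge0}\le0$, forcing $\omega_t(0)\in\{0,1\}$ (and similarly at $v=1$). The trace relation $\rho_t(0)=1/(2-\omega_t(0))$ then gives $\rho_t(0)\in\{\tfrac12,1\}$. Finally, for any boundary flux $\mc Q$ associated to $\mf h$ one checks $\mc Q(\rho_t(0),\tfrac12)\le0$ by a single integration by parts, using that $\mf h(\tfrac12)=\mf h(1)=0$ and that $\mc F(\cdot,\tfrac12)$ is convex. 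This route is both shorter and rigorous; I would replace your boundary argument by it.
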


\begin{proof}
    Let $\omega$ be the unique entropy solution $\omega$ to the Burgers equation with Dirichlet boundary conditions \eqref{BurgersEqDirichlet}. By Theorem \ref{thm:existence_entropysol} in Section \ref{appendix}, we know that its parabolic perturbation
    \begin{equation}
        \label{eq:ParabolicPertBurgers}
        \left\{\begin{array}{l}
           \partial_t\omega^\varepsilon +\dfrac pm\partial_v J(\omega^\varepsilon ) =\dfrac{\varepsilon}{m^2}\partial_v^2\omega^\varepsilon, \\
           \omega_0^\varepsilon(\cdot )=\omega^\mathrm{ini}(\cdot ),\\
           \omega_t^\varepsilon(0)=0\mbox{ and }\omega_t^\varepsilon(1)=1\qquad\mbox{ for all }t\in [0,T]
        \end{array}\right.
    \end{equation}
    admits a unique smooth solution and that, up to an extraction,
    \begin{equation*}
        \omega^\varepsilon\xrightarrow[\varepsilon\to 0]{}\omega\qquad\mbox{ in } L^1([0,T]\times [0,1])\mbox{ and almost-everywhere in }[0,T]\times [0,1].
    \end{equation*}
    Let $\rho^\varepsilon$ and $\rho$ be the inverse images of $\omega^\varepsilon$ and $\omega$ respectively by the mapping $\Phi$, \textit{i.e.}~$\rho_t^\varepsilon = \Phi^{-1}(\omega_t^\varepsilon)$ and $\rho_t=\Phi^{-1}(\omega_t)$ for all $t\in [0,T]$. First, let us prove that $\rho^\varepsilon$ converges to $\rho$ in $L^1([0,T]\times [0,1])$ as $\varepsilon\to 0$, and for this it is sufficient to prove the convergence of $\omega^\varepsilon\circ v^\varepsilon$ towards $\omega\circ v$ where $v^\varepsilon$ is defined by 
    \begin{equation*}
        v_t^\varepsilon (u)=\frac1m\int_0^u\rho_t^\varepsilon (u')\diff u'.
    \end{equation*}
    As usual, the inverse function $u_t^\varepsilon (v)$ of $v_t^\varepsilon (u)$ is given by
    \begin{equation*}
        u_t^\varepsilon (v)=m\int_0^v\big( 2-\omega_t^\varepsilon (v')\big)\diff v'
    \end{equation*}
    so for any $t\in [0,T]$, and any $v\in [0,1]$,  we have 
    \begin{equation*}
        |u_t^\varepsilon (v)-u_t(v)|\le m\int_0^v|\omega_t^\varepsilon (v')-\omega_t (v')|\diff v'\le m \|\omega_t^\varepsilon -\omega_t\|_{L^1([0,1])}.
    \end{equation*}
    As $\omega^\varepsilon$ converges to $\omega$ in $L^1$, the right-hand side of this inequality converges to 0 and it proves that $u_t^\varepsilon$ converges uniformly to $u_t$ on $[0,1]$ for any $t\in [0,T]$. Since $u_t^\varepsilon$ and $u_t$ are strictly increasing functions, we get that the inverse $v_t^\varepsilon$ also converges uniformly to $v_t$ on $[0,1]$ for any $t\in [0,T]$. Therefore,
    \begin{align*}
        \| \omega_t^\varepsilon \circ v_t^\varepsilon - \omega_t\circ v_t\|_{L^1([0,1])} & \le \|\omega_t^\varepsilon\circ v_t^\varepsilon -\omega_t^\varepsilon\circ v_t\|_{L^1([0,1])}+\|\omega_t^\varepsilon\circ v_t-\omega_t\circ v_t\|_{L^1([0,1])}\\
        & \le \|\partial_v\omega_t^\varepsilon\|_{L^\infty ([0,1])}\| v_t^\varepsilon - v_t\|_{L^\infty([0,1])} + \frac1m\|\omega_t^\varepsilon -\omega_t\|_{L^1([0,1])}.
    \end{align*}
    where to obtain last inequality we used the mean-value theorem for the first term, and a change a variable with the fact that $\rho$ takes value in $[0,1]$ for the second term. The first term on the right-hand side of this inequality vanishes as $\varepsilon\to 0$ as $\omega^\varepsilon$ is smooth and $v_t^\varepsilon$ converges uniformly to $v_t$. The remaining term also vanishes as $\omega^\varepsilon$ converges to $\omega$ in $L^1$, so it proves that 
    \begin{equation*}
        \omega_t^\varepsilon\circ v_t^\varepsilon\xrightarrow[\varepsilon\to 0]{} \omega_t\circ v_t\qquad\mbox{ in }L^1([0,1])
    \end{equation*}
    for any $t\in [0,T]$. As $\omega^\varepsilon$ is bounded by 1, we can use the dominated convergence theorem to deduce that $\omega^\varepsilon\circ v^\varepsilon$ converges to $\omega\circ v$ in $L^1([0,T]\times [0,1])$, and then deduce that $\rho^\varepsilon$ converges to $\rho$ in $L^1$. We are now ready to prove that $\rho$ is the unique entropy solution to \eqref{ScalarConsLawDirichlet}, beginning with the entropy inequality.

    \medskip

    Using \eqref{eq:ParabolicPertBurgers} and similar computations to the one made in the proof of Lemma \ref{lemma:weaksol}, we can show that $\rho^\varepsilon$ is a weak solution to 
    \begin{equation}
        \label{eq:ParabolicPertConv-Diff}
        \left\{\begin{array}{l}
           \partial_t\rho^\varepsilon +p\partial_u \mf{h}(\rho^\varepsilon ) =\varepsilon\partial_u^2\mf{a}(\rho^\varepsilon), \\
           \rho_0^\varepsilon(\cdot )=\rho^\mathrm{ini}(\cdot ),\\
           \rho_t^\varepsilon(0)=\frac12\mbox{ and }\rho_t^\varepsilon(1)=1\qquad\mbox{ for all }t\in [0,T].
        \end{array}\right.
    \end{equation}
    Let $(F,Q)$ be a Lax entropy-flux pair associated to $\mf{h}$. If we multiply both sides of the PDE \eqref{eq:ParabolicPertConv-Diff} by $F'(\rho_t^\varepsilon )$, we can rewrite it as
    \begin{align*}
        \partial_tF(\rho_t^\varepsilon )+p\partial_u Q(\rho_t^\varepsilon ) = \varepsilon F'(\rho_t^\varepsilon )\partial_u^2\left(\frac{2\rho_t^\varepsilon -1}{\rho_t^\varepsilon}\right) & = \varepsilon\partial_u^2\tilde{F}(\rho_t^\varepsilon) - \varepsilon \frac{F''(\rho_t^\varepsilon)}{(\rho_t^\varepsilon )^2}(\partial_u\rho_t^\varepsilon )^2\\
        & \le \varepsilon \partial_u^2\tilde{F}(\rho_t^\varepsilon )
    \end{align*}
    where $\tilde{F}(r)$ is a primitive of $\frac{F'(r)}{r^2}$, and in the last inequality we used the fact that $F$ is convex. If we multiply both sides of this inequality by a non-negative, compactly supported smooth test function $\varphi\in \mc{C}_c^\infty ([0,T]\times [0,1])$ and we integrate by parts, this inequality rewrites
    \begin{equation}
        \label{eq:step1_entropyineq}
        \int_0^T \Big( \big\langle \partial_t\varphi_s,F(\rho_s^\varepsilon)\big\rangle + p\big\langle \partial_u\varphi_s,Q(\rho_s^\varepsilon)\big\rangle\Big)\diff s \ge -\varepsilon \int_0^T \big\langle \partial_u^2\varphi_s ,\tilde{F}(\rho_s^\varepsilon )\big\rangle\diff s
    \end{equation}
    in the sense of distribution, where $\langle \cdot ,\cdot\rangle$ denotes the standard scalar product in $L^2([0,1])$. The right-hand side of this inequality converges clearly to 0, and as $\rho^\varepsilon$ converges to $\rho$ in $L^1$, it is also straightforward to check that 
    \begin{align*}
        \left\{\begin{array}{ll}
            F(\rho^\varepsilon )\xrightarrow[\varepsilon\to 0]{} F(\rho ), & \\
            & \quad\mbox{ in }L^1([0,T]\times [0,1]).\\
            Q(\rho^\varepsilon )\xrightarrow[\varepsilon\to 0]{} Q(\rho )&
        \end{array}\right.
    \end{align*}
    Letting $\varepsilon$ go to 0 in \eqref{eq:step1_entropyineq} proves that $\rho$ satisfies the entropy inequality (i) of Definition~\ref{defin:entropysol}. It also satisfies the $L^1$-initial condition (ii) as $\rho^\varepsilon$ converges to $\rho$ in $L^1$.

    \medskip

    It remains to prove that $\rho$ satisfies Otto-type boundary conditions (iii). As $\omega$ satisfies the entropy inequality, it is known (see \cite[Proposition 2.5]{Xu_hydrodynamics}) that it admits boundary traces, \textit{i.e.}~there are functions $\omega_\cdot (\cdot )$ and $\omega_\cdot (1)\in L^\infty ([0,T])$ such that 
    \begin{equation*}
        \lim_{v\to 0^+} \int_0^T |\omega_t(v)-\omega_t(0)|\diff t=0\quad\mbox{and}\quad \lim_{v\to 1^-}\int_0^T |\omega_t(v)-\omega_t(1)|\diff t =0.
    \end{equation*}
    For the same reason, $\rho$ admits boundary traces $\rho_\cdot (0)$ and $\rho_\cdot (1)$, and using the mapping between $\rho$ and $\omega$, we easily get the relations
    \begin{equation}
        \label{eq:relation_boundarytraces}
        \rho_t(0)=\frac{1}{2-\omega_t(0)}\qquad\mbox{ and }\qquad \rho_t(1)=\frac{1}{2-\omega_t(1)}\quad\mbox{ for all }t\in [0,T].
    \end{equation}
    Let $F$ be a $\mc{C}^2$ function which satisfies
    \begin{equation*}
        \forall r\le 0,\; F(r)=0,\qquad \forall r>1,\; F(r)=r\quad\mbox{ and }\quad \forall r\in\R,\; F''(r)\ge 0.
    \end{equation*}
    Then, for any $q\in [0,1]$, any $\gamma >0$, define
    \begin{equation*}
        \forall r\in [0,1],\qquad F_{\gamma ,q}(r)= \frac{1}{\gamma} F\big( \gamma (r-q)\big),
    \end{equation*}
    and also 
    \begin{equation*}
        \forall r\in [0,1],\qquad Q_{\gamma ,q}(r)= \int_q^r J'(\tilde{r}) F'\big( \gamma (\tilde{r}-q)\big)\diff\tilde{r}.
    \end{equation*}
    The couple $(F_{\gamma,q},Q_{\gamma ,q})$ is a Lax entropy-flux pair associated to $J$, and it is not difficult to check that the couple $(\mc{F}_\gamma ,\mc{Q}_\gamma)(r,q)= (F_{\gamma ,q},Q_{\gamma ,q})(r)$ defines a boundary-entropy flux pair. In particular, since $\omega$ satisfies the Otto-type boundary conditions, we have that 
    \begin{equation*}
        \lim_{v\to 0^+}\int_0^T\varphi (t)\mc{Q}_\gamma\big( \omega_t(v),0\big)\diff t = \int_0^T \varphi (t)\mc{Q}_\gamma \big( \omega_t (0),0\big)\diff t\le 0
    \end{equation*}
    for any continuous and compactly supported function ${\varphi : [0,T]\longrightarrow\R_+}$. As a consequence,
    \begin{equation}
        \label{eq:step1_Ottobound}
        \mc{Q}_\gamma\big( \omega_t(0),0\big) \le 0 \quad\mbox{ for almost every }t\in [0,T].
    \end{equation}
    Letting $\gamma$ go to infinity in the definition of $F_{\gamma ,q}$ and $Q_{\gamma ,q}$, one can see that 
    \begin{equation*}
        \lim_{\gamma\to +\infty}F_{\gamma ,q}(r)=(r-q)\mathds{1}_{r\ge q}\qquad\mbox{ and }\qquad\lim_{\gamma\to +\infty}Q_{\gamma ,q}(r) = \big( J(r)-J(q)\big)\mathds{1}_{r\ge q}. 
    \end{equation*}
    Therefore, letting $\gamma$ go to infinity in inequality \eqref{eq:step1_Ottobound} yields that $J\big( \omega_t(0)\big)\mathds{1}_{\omega_t(0)\ge 0}\le 0$ for almost every $t\in [0,T]$, which implies, as $\omega_t(0)$ takes values in $[0,1]$ that $\omega_t(0)$ is either 0 or 1 for almost every $t\in [0,T]$. In a similar way, we can prove that $\omega_t(1)\in\{ 0,1\}$ for almost every $t\in [0,T]$. Thanks to relation \eqref{eq:relation_boundarytraces}, we deduce that $\rho_t(0)$ and $\rho_t(1)$ have value either $\frac12$ or $1$ for almost every $t\in [0,T]$.

    We are now in position to prove that $\rho$ satisfies the Otto-type boundary conditions, we prove it only for the left boundary, but the proof on the right is exactly the same. Let $\mc{F},\mc{Q}:[0,1]^2\longrightarrow\R$ be a boundary entropy-flux pair associated to $\mf{h}$. By properties, we have that
    \begin{equation*}
        \mc{Q}\big(\rho_t(0),\tfrac12\big) = \int_\frac12^{\rho_t(0)} \partial_r\mc{Q}\big( r,\tfrac12\big)\diff r = \int_\frac12^{\rho_t(0)} \mf{h}'(r)\partial_r\mc{F}\big( r,\tfrac12\big)\diff r.
    \end{equation*}
    When $\rho_t(0)=\frac 12$ this integral is equal to 0 and then \eqref{eq:OttoBC} is satisfied, so we only have to treat the case when $\rho_t(0)=1$. In this case, an integration by parts in the last integral gives that
    \begin{align*}
        \mc{Q}\big( \rho_t(0),\tfrac12\big) & = \big[ \mf{h}(r)\partial_r\mc{F}\big(r,\tfrac12\big)\big]_\frac12^1-\int_\frac12^1 \mf{h}(r)\partial_r^2\mc{F}\big( r,\tfrac12\big)\diff r\\
         & =-\int_\frac12^1 \mf{h}(r)\partial_r^2\mc{F}\big( r,\tfrac12\big)\diff r\\
         &\le 0
    \end{align*}
    as $\mf{h}$ vanishes at $\tfrac12$ and $1$, and the function $\mc{F}\big( \cdot ,\tfrac12\big)$ is convex. This proves the Otto-type boundary condition \eqref{eq:OttoBC} on the left boundary.\hfill $\square$
\end{proof}

\section{Notions of solution}
\label{appendix}

\subsection{Weak solutions}

We define here the notion of weak solutions that we use in Section \ref{sec:hydrodynamic} for the generic quasi-linear convection-diffusion equation
\begin{equation}
    \label{genericConvDiffEq}
    \left\{\begin{array}{l}
        \partial_t\rho = \sigma\partial_u^2 \mf{a}(\rho )-p\partial_u\mf{h}(\rho )\qquad\mbox{ on }[0,T]\times [0,1],\\
        \rho_0(\cdot )=\rho^\mathrm{ini}(\cdot )
    \end{array}\right.
\end{equation}
where $\mf{a}$ and $\mf{h}$ are general smooth functions, $\sigma$ and $p$ are non-negative real numbers, and $\rho^\mathrm{ini}:[0,1]\longrightarrow [0,1]$ is a measurable initial profile.

The space of test functions that we consider is the space $\mc{C}^{1,2}([0,T]\times [0,1])$ composed of functions $G:(t,u)\in[0,T]\times [0,1]\longmapsto G_t(u)\in\R$ that are of class $\mc{C}^1$ with respect to the time variable, and of class $\mc{C}^2$ with respect to the space variable.

We first consider the case of Robin boundary conditions, that is necessary for defining solutions of equations \eqref{Conv-DiffEqRobin} and \eqref{ViscBurgersEqRobin} in the weakly asymmetric regime.

\begin{definition}[Weak solution of \eqref{genericConvDiffEq} with Robin boundary conditions]
    \label{defin:weaksolRobin}
    We say that a function $\rho : [0,T]\times [0,1]\longrightarrow\R$ is a \emph{weak solution} to the convection-diffusion equation \eqref{genericConvDiffEq} with \emph{Robin boundary conditions}
    \begin{equation*}
        \Big( \sigma\partial_u\mf{a}(\rho_t)(u)=p\mf{h}(\rho_t)(u)\Big)\Big|_{u=0,1}\qquad\mbox{ for all }t\in [0,T]
    \end{equation*}
    if for any test function $G\in\mc{C}^{1,2}([0,T]\times [0,1])$ and any $t\in [0,T]$, we have
    \begin{multline}
        \label{eq:weaksolRobin}
        \int_0^1\rho_t(u)G_t(u)\diff u - \int_0^1\rho^\mathrm{ini}(u)G_0(u)\diff u - \int_0^t\int_0^1 \rho_s(u)\partial_tG_s(u)\diff u\diff s\\
        =\sigma\int_0^t\int_0^1\mf{a}(\rho_s(u))\partial_u^2G_s(u)\diff u \diff s + p\int_0^t\int_0^1\mf{h}(\rho_s(u))\partial_uG_s(u)\diff u\diff s
        \\- \sigma\int_0^t \big\{ \mf{a}(\rho_s(1))\partial_uG_s(1)-\mf{a}(\rho_s(0))\partial_uG_s(0)\big\}\diff s.
    \end{multline}
\end{definition}

For the symmetric and very weakly asymmetric regimes, we need to introduce the notion of weak solutions to the non-linear diffusion equation
\begin{equation}
    \label{NLDiffEq}
    \left\{\begin{array}{l}
        \partial_t\rho = \sigma\partial_u^2 \mf{a}(\rho )\qquad\mbox{ on }[0,T]\times [0,1],\\
        \rho_0(\cdot )=\rho^\mathrm{ini}(\cdot )
    \end{array}\right.
\end{equation}
with Neumann boundary conditions. This allows to define solutions of \eqref{HeatEqNeumann} and \eqref{FastDiffEqNeumann}.

\begin{definition}[Weak solution of \eqref{NLDiffEq} with Neumann boundary conditions]
    \label{defin:weaksolNeumann}
    We say that a function $\rho : [0,T]\times [0,1]\longrightarrow\R$ is a \emph{weak solution} to the non-linear diffusion equation \eqref{NLDiffEq} with \emph{Neumann boundary conditions}
    \begin{equation*}
        \partial_u\mf{a}(\rho_t)(0)=\partial_u\mf{a}(\rho_t)(1)=0\qquad\mbox{ for all }t\in [0,T]
    \end{equation*}
    if for any test function $G\in\mc{C}^{1,2}([0,T]\times [0,1])$ and any $t\in [0,T]$, equation \eqref{eq:weaksolRobin} with $p=0$ is satisfied.
\end{definition}

\subsection{Entropy solutions}

We now turn to the more elaborate notion of \emph{entropy solutions} that will be useful to define solutions of equations \eqref{BurgersEqDirichlet} and \eqref{ScalarConsLawDirichlet}. Consider the generic scalar conservation law in the bounded domain $[0,1]$
\begin{equation}
    \label{genericConsLawDir}
    \left\{\begin{array}{ll}
        \partial_t\rho +p\partial_u\mf{h}(\rho )=0 & \mbox{ on }[0,T]\times [0,1],\\
        \rho_0(\cdot )=\rho^\mathrm{ini}(\cdot ),& \\
        \rho_t(0)=\rho_-\mbox{ and }\rho_t(1)=\rho_+ & \mbox{ for all }t\in [0,T]
    \end{array}\right.
\end{equation}
where $\mf{h}$ is a smooth function, $\rho_-,\rho_+\in [0,1]$ and $\rho^\mathrm{ini}$ is a fixed initial profile. We follow \cite[Section 2.6]{MalekPDE} to introduce the notion of entropy solutions.

\begin{definition}[Lax Entropy-Flux pair]
    \label{defin:LaxEntropy-Flux}
    A pair of functions $F,Q:[0,1]\longrightarrow\R$ is called a \emph{Lax Entropy-Flux pair} associated to $\mf{h}$ if:
    \begin{itemize}
        \item They are of class $\mc{C}^2$,
        \item $F''(r)\ge 0$ for all $r\in [0,1]$, \textit{i.e.}~$F$ is convex,
        \item $Q'(r)=\mf{h}'(r)F'(r)$ for all $r\in [0,1]$.
    \end{itemize}
\end{definition}

\begin{definition}[Boundary Entropy-Flux pair]
    \label{defin:BoundaryEntropy-Flux}
    A pair $\mc{F},\mc{Q}:[0,1]^2\longrightarrow\R$ is called a \emph{boundary entropy-flux pair} if $(\mc{F},\mc{Q})(\cdot ,q)$ is a Lax entropy-flux pair for all ${q\in [0,1]}$, and if
    \begin{equation}
        \forall q\in [0,1],\qquad \mc{F}(q,q)=\partial_r\mc{F}(r,q)|_{r=q}=\mc{Q}(q,q)=0.
    \end{equation}
\end{definition}

\begin{definition}[Entropy solution to \eqref{genericConsLawDir} with Dirichlet boundary conditions] 
    \label{defin:entropysol}
    A function $\rho :[0,T]\times [0,1]\longrightarrow [0,1]$ is called an \emph{entropy solution} to \eqref{genericConsLawDir} if the following conditions are satisfied:
    \begin{enumerate}
        \item[(i)] \emph{Entropy inequality}: for all Lax entropy-flux pair $(F,Q)$ associated to $\mf{h}$,
        \begin{equation}
            \label{eq:entropyinequality}
            \partial_tF(\rho )+p\partial_u Q(\rho )\le 0
        \end{equation}
        as a distribution on $(0,T)\times (0,1)$;
        \item[(ii)] \emph{$L^1$-initial condition}:
        \begin{equation}
            \label{eq:L1-IC}
            \lim_{t\to 0^+}\int_0^1|\rho_t(u)-\rho^\mathrm{ini}(u)|\diff u =0;
        \end{equation}
        \item[(iii)] \emph{Otto-type boundary conditions}: for all boundary flux $\mc{Q}$ associated to $\mf{h}$ and for all non-negative, continuous and compactly supported function $\varphi : \R_+\longrightarrow\R_+$,
        \begin{equation}
            \label{eq:OttoBC}
            \begin{array}{c}
                \displaystyle\lim_{x\to 0^+}\int_0^T \varphi (t)\mc{Q}\big( \rho_t(x),\rho_-\big)\diff t \le 0, \\
                \\
                \displaystyle\lim_{x\to 1^-}\int_0^T \varphi (t)\mc{Q}\big( \rho_t(x),\rho_+\big)\diff t \ge 0.
            \end{array}
        \end{equation}
    \end{enumerate}
\end{definition}

Take $\varepsilon >0$ and consider the following parabolic perturbation of \eqref{genericConsLawDir}
\begin{equation}
    \label{genericParabolicPert}
    \left\{\begin{array}{ll}
        \partial_t\rho^\varepsilon +p\partial_u\mf{h}(\rho^\varepsilon )=\varepsilon\partial_u^2\rho^\varepsilon & \mbox{ on }[0,T]\times [0,1],\\
        \rho_0^\varepsilon (\cdot )=\rho^\mathrm{ini}(\cdot ),& \\
        \rho_t^\varepsilon (0)=\rho_-\mbox{ and }\rho_t^\varepsilon(1)=\rho_+ & \mbox{ for all }t\in [0,T]
    \end{array}\right.
\end{equation}
It is known that \eqref{genericParabolicPert} admits a unique smooth solution $\rho^\varepsilon$ (for the proof, one can get inspired of the proof of \cite[Lemma 2.2.3]{MalekPDE}). Then, we have the following result which is an adaptation of \cite[Theorem 2.8.20]{MalekPDE}:

\begin{theorem}[Existence of an entropy solution]
    \label{thm:existence_entropysol}
    Under the assumption that $\mf{h}$ is of class $\mc{C}^2$ and that the function $\rho^\mathrm{ini}$ is bounded, the family of smooth solutions $(\rho^\varepsilon)_{\varepsilon >0}$ to the parabolic perturbation \eqref{genericParabolicPert} is uniformly bounded and converges in $\mc{C}^0([0,T],L^1([0,1]))$ to some function $\rho\in L^\infty ([0,T]\times [0,1])$ which is an entropy solution to the problem \eqref{genericConsLawDir} in the sense of Definition \ref{defin:entropysol}. In particular, there exists an entropy solution to \eqref{genericConsLawDir}.
\end{theorem}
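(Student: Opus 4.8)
The plan is to carry out the vanishing-viscosity programme along the lines of \cite[Theorem 2.8.20]{MalekPDE}, with the Dirichlet boundary treated as in Otto's formulation. First I would record the two \emph{a priori} estimates that drive everything. Since $\rho^\mathrm{ini}$ is bounded and the boundary data $\rho_-,\rho_+$ are constants in $[0,1]$, the parabolic maximum principle applied to \eqref{genericParabolicPert} gives
\[
\min\{\inf\rho^\mathrm{ini},\rho_-,\rho_+\}\le\rho^\varepsilon\le\max\{\sup\rho^\mathrm{ini},\rho_-,\rho_+\}
\]
uniformly in $\varepsilon$ and $(t,u)$, which is the claimed uniform bound. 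Next I would derive the dissipation estimate: multiplying \eqref{genericParabolicPert} by $\rho^\varepsilon-b$, where $b$ is a fixed smooth function with $b(0)=\rho_-,\,b(1)=\rho_+$ (so that the test function vanishes on the boundary and all boundary flux contributions drop), integrating by parts and absorbing the cross term yields $\varepsilon\int_0^T\!\!\int_0^1(\partial_u\rho^\varepsilon)^2\diff u\diff s\le C$ with $C$ independent of $\varepsilon$. This quantity is exactly what will kill the viscous terms in the limit and, crucially, control the boundary layer.

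With the uniform bound, the family $(\rho^\varepsilon)$ is relatively compact for the weak-$*$ topology of $L^\infty$ and generates a Young measure $(\nu_{t,u})$; the crux of this step is to upgrade this to \emph{strong} convergence. For the fluxes arising in this paper, $\mf{h}$ is genuinely nonlinear (strictly concave on $[\tfrac12,1]$), so one may invoke compensated compactness (the div--curl lemma applied to two Lax entropy-flux pairs) to conclude that $\nu_{t,u}$ is a Dirac mass, whence $\rho^\varepsilon\to\rho$ in $L^1([0,T]\times[0,1])$ and a.e.\ along a subsequence; for a general $\mc{C}^2$ flux one instead interprets the limit as a measure-valued entropy solution and applies the DiPerna-type reduction theorem of \cite{MalekPDE}. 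Finally, to promote this to convergence in $\mc{C}^0([0,T],L^1([0,1]))$, I would establish a uniform-in-$\varepsilon$ modulus of continuity in time in $L^1$ (testing the equation against spatial translations and using the dissipation bound), so that an Arzel\`a--Ascoli argument in $\mc{C}^0([0,T],L^1)$ applies.

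Verifying the entropy inequality (i) is then routine. Fixing a Lax entropy-flux pair $(F,Q)$ and multiplying \eqref{genericParabolicPert} by $F'(\rho^\varepsilon)$, the relation $Q'=\mf{h}'F'$ gives the pointwise identity
\[
\partial_tF(\rho^\varepsilon)+p\partial_uQ(\rho^\varepsilon)=\varepsilon\partial_u^2F(\rho^\varepsilon)-\varepsilon F''(\rho^\varepsilon)(\partial_u\rho^\varepsilon)^2\le\varepsilon\partial_u^2F(\rho^\varepsilon),
\]
using convexity of $F$. Testing against a non-negative $\varphi\in\mc{C}_c^\infty((0,T)\times(0,1))$ and letting $\varepsilon\to0$, the viscous term is $\varepsilon\iint F(\rho^\varepsilon)\partial_u^2\varphi=O(\varepsilon)$ and vanishes, while the strong convergence $F(\rho^\varepsilon)\to F(\rho)$ and $Q(\rho^\varepsilon)\to Q(\rho)$ in $L^1$ produces \eqref{eq:entropyinequality}. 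The $L^1$-initial condition \eqref{eq:L1-IC} of (ii) is immediate from $\rho_0^\varepsilon=\rho^\mathrm{ini}$ together with the $\mc{C}^0([0,T],L^1)$ convergence.

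The main obstacle is the Otto-type boundary condition (iii); I would treat the left boundary, the right being symmetric. Given a boundary entropy-flux pair $(\mc{F},\mc{Q})$ and a non-negative $\varphi\in\mc{C}_c((0,T))$, multiplying \eqref{genericParabolicPert} by $\partial_r\mc{F}(\rho^\varepsilon,\rho_-)$ gives, as above, $\partial_t\mc{F}(\rho^\varepsilon,\rho_-)+p\partial_u\mc{Q}(\rho^\varepsilon,\rho_-)\le\varepsilon\partial_u^2\mc{F}(\rho^\varepsilon,\rho_-)$. Integrating over $(t,u)\in(0,T)\times(0,h)$ against $\varphi(t)$ and exploiting the normalisation $\mc{F}(\rho_-,\rho_-)=\partial_r\mc{F}(\rho_-,\rho_-)=\mc{Q}(\rho_-,\rho_-)=0$ of Definition \ref{defin:BoundaryEntropy-Flux} together with the exact Dirichlet value $\rho_t^\varepsilon(0)=\rho_-$ kills every contribution at $u=0$, leaving $p\int_0^T\varphi(t)\mc{Q}(\rho_t^\varepsilon(h),\rho_-)\diff t$, an $O(h)$ time term, and a residual viscous flux $\varepsilon\int_0^T\varphi\,\partial_r\mc{F}(\rho_t^\varepsilon(h),\rho_-)\partial_u\rho_t^\varepsilon(h)\diff t$. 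The delicate point is this last term, which carries a boundary value of $\partial_u\rho^\varepsilon$: I would average over the layer width $h\in(0,\delta)$ and apply Cauchy--Schwarz with the dissipation bound to show it is $O(\varepsilon^{1/2}\delta^{-1/2})$, hence negligible as $\varepsilon\to0$ for fixed $\delta$. Sending $\varepsilon\to0$ and then $\delta\to0$, and using the existence of a strong boundary trace (available for genuinely nonlinear flux, cf.\ the trace result of \cite{Xu_hydrodynamics}), yields $\lim_{x\to0^+}\int_0^T\varphi(t)\mc{Q}(\rho_t(x),\rho_-)\diff t\le0$, which is precisely \eqref{eq:OttoBC}; the symmetric computation at $u=1$ gives the reversed inequality. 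This completes the verification of Definition \ref{defin:entropysol} and hence the proof.
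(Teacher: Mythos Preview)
The paper does not actually supply a proof of this theorem: it is stated in the appendix as ``an adaptation of \cite[Theorem 2.8.20]{MalekPDE}'' and is simply quoted as a black box, with no argument given. Your proposal therefore cannot be compared against a proof in the paper; rather, you have written out the vanishing-viscosity argument that the cited reference contains, and in that sense your proposal is fully aligned with what the paper defers to.

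Your sketch is correct in outline and hits the standard beats of \cite{MalekPDE}: maximum principle for the uniform $L^\infty$ bound, the energy/dissipation estimate $\varepsilon\iint(\partial_u\rho^\varepsilon)^2\le C$, compactness via Young measures (with compensated compactness or DiPerna reduction to get strong $L^1$ convergence), the entropy inequality by multiplying by $F'(\rho^\varepsilon)$, and Otto's boundary-layer argument for (iii). The boundary-layer estimate you indicate, averaging in $h\in(0,\delta)$ and using Cauchy--Schwarz with the dissipation bound to get $O(\varepsilon^{1/2}\delta^{-1/2})$, is exactly the right mechanism. One small caveat: the existence of strong boundary traces that you invoke at the very end requires genuine nonlinearity of $\mf{h}$ (or a Vasseur-type argument), which holds for the concrete fluxes in this paper but is not guaranteed by the bare hypothesis ``$\mf{h}\in\mc{C}^2$'' of the theorem as stated; in full generality one must either work with essential limits as in \eqref{eq:OttoBC} or restrict to nondegenerate fluxes. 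This is a minor packaging issue rather than a gap in the argument for the paper's purposes.
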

Then, we state the following result which is a corollary of \cite[Theorem 2.7.28]{MalekPDE}:
\begin{theorem}
    If $\mf{h}$ is of class $\mc{C}^1$, and if the initial condition $\rho^\mathrm{ini}$ is bounded, there is at most one entropy solution to the problem \eqref{genericConsLawDir}.
\end{theorem}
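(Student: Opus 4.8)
The statement is a uniqueness result for entropy solutions of a scalar conservation law with Dirichlet data, so the plan is to run the classical Kruzkov doubling-of-variables argument adapted to a bounded domain with Otto-type boundary conditions, exactly as packaged in \cite[Theorem 2.7.28]{MalekPDE}. In fact the whole statement reduces to checking that our flux $p\mf{h}$ is of class $\mc{C}^1$ and our data are bounded, which are precisely the hypotheses of that theorem, so strictly speaking the proof is a one-line invocation. For completeness I sketch the mechanism. Let $\rho$ and $\tilde\rho$ be two entropy solutions of \eqref{genericConsLawDir} sharing the same initial profile $\rho^\mathrm{ini}$ and the same boundary values $\rho_-,\rho_+$. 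Since both satisfy the entropy inequality (i) of Definition \ref{defin:entropysol}, by \cite[Proposition 2.5]{Xu_hydrodynamics} they admit strong boundary traces at $u=0$ and $u=1$, which is what allows the boundary terms below to be given a meaning.

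First I would establish the interior Kato inequality. Using the Kruzkov entropies $|r-k|$, obtained as limits of the smooth convex Lax entropies allowed in Definition \ref{defin:entropysol}, and doubling the variables $(t,u)$ and $(s,w)$, one tests the entropy inequality for $\rho$ against $k=\tilde\rho(s,w)$ and symmetrically, regularises with a product mollifier, and passes to the diagonal. The standard computation yields, for every non-negative $\psi\in\mc{C}^\infty_c((0,T)\times(0,1))$,
\begin{equation*}
    \int_0^T\!\!\int_0^1 \Big( |\rho-\tilde\rho|\,\partial_t\psi + p\,\mathrm{sgn}(\rho-\tilde\rho)\big(\mf{h}(\rho)-\mf{h}(\tilde\rho)\big)\,\partial_u\psi \Big)\diff u\diff t \ge 0,
\end{equation*}
that is, $\partial_t|\rho-\tilde\rho| + p\,\partial_u\big[\mathrm{sgn}(\rho-\tilde\rho)(\mf{h}(\rho)-\mf{h}(\tilde\rho))\big]\le 0$ as a distribution on the interior.

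Next I would promote this to the full domain by admitting test functions that do not vanish at $u=0,1$ nor at $t=0$. The extra terms produced are an initial contribution, handled by the $L^1$-initial condition (ii) (which forces the two initial traces to coincide since $\rho^\mathrm{ini}$ is common), together with two boundary contributions at $u=0$ and $u=1$. The crux is to show these boundary contributions carry the favourable sign, namely $\mathrm{sgn}(\rho-\tilde\rho)(\mf{h}(\rho)-\mf{h}(\tilde\rho))\ge 0$ on the trace at $u=1$ and $\le 0$ at $u=0$. This is exactly where the Otto-type boundary conditions (iii) enter: applying the inequalities \eqref{eq:OttoBC} to both $\rho$ and $\tilde\rho$ with the appropriate boundary-entropy-flux pairs, and using the common boundary values $\rho_\pm$, one forces the boundary trace of the relative flux to point in the dissipative direction at each endpoint. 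Granting this, integrating the Kato inequality over $[0,1]$ gives $\tfrac{d}{dt}\|\rho_t-\tilde\rho_t\|_{L^1([0,1])}\le 0$, and since the initial profiles coincide we conclude $\rho_t=\tilde\rho_t$ for almost every $t\in[0,T]$.

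The main obstacle is precisely the boundary-term analysis of the previous step: converting the integral Otto inequalities \eqref{eq:OttoBC} into the pointwise-in-time sign condition on the boundary traces of the relative flux. This is the delicate ingredient of the Otto theory, and it is exactly what \cite[Theorem 2.7.28]{MalekPDE} supplies once the flux is $\mc{C}^1$ and the data are bounded; in our application we therefore simply invoke that theorem after noting $p\mf{h}\in\mc{C}^1$ and $\rho^\mathrm{ini}$ bounded, and no boundary computation beyond the cited result is required.
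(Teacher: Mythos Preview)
Your proposal is correct and matches the paper's own treatment: the paper does not give a proof but simply states the theorem as a corollary of \cite[Theorem 2.7.28]{MalekPDE}, exactly the reference you invoke. Your additional sketch of the Kruzkov doubling argument and boundary-trace analysis is fine but goes beyond what the paper records.
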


\end{document}